\numberwithin{equation}{section}
\theoremstyle{plain}
\newtheorem{prop}{Proposition}[section]
\newtheorem{theorem}[prop]{Theorem}
\newtheorem{cor}[prop]{Corollary}
\newtheorem{lemma}[prop]{Lemma}
\theoremstyle{definition}
\newtheorem{defi}[prop]{Definition}
\newtheorem{con}[prop]{Convention}
\theoremstyle{remark}
\newtheorem{example}[prop]{Example}
\newtheorem{bem}[prop]{Remark}
\newcommand{\R}{\mathbb{R}}
\newcommand{\C}{\mathbb{C}}
\newcommand{\N}{\mathbb{N}}
\newcommand{\psd}{\operatorname{P}_{3,4}}
\newcommand{\form}{\operatorname{H}}
\newcommand{\spann}{\operatorname{span}}
\newcommand{\longto}{\longrightarrow}
\newcommand{\GL}[1][3]{\operatorname{PGL}_{#1}(\R)} 
\newcommand{\G}{\mathcal{G}}
\newcommand{\zero}{\mathcal{Z}}
\newcommand{\proj}{\mathbb{P}}
\newcommand{\ord}{\operatorname{ord}}
\newcommand{\Disk}{\operatorname{D}}
\newcommand{\stab}{\operatorname{stab}}
\newcommand{\diag}{\operatorname{diag}}
\newcommand{\typeA}{\mathcal{A}}
\newcommand{\typeB}{\mathcal{B}}
\newcommand{\typeC}{\mathcal{C}}
\newcommand{\V}{\mathbb{V}}
\newcommand{\Var}{\mathcal{V}}
\newcommand{\A}{\mathbb{A}}
\newcommand{\bu}[1]{\widetilde{#1}}
\newcommand{\inp}{\operatorname{inp}}
\renewcommand{\S}{\textsf{\textit{S}}}
\newcommand{\Li}{\textsf{\textit{L}}}
\newcommand{\Qu}{\textsf{\textit{Q}}}
\newcommand{\T}{\textsf{\textit{T}}}
\newcommand{\D}{\textsf{\textit{D}}}
\newcommand{\qpid}{\mathfrak{q}}
\begin{document}
\author{Aaron Kunert}
\thispagestyle{empty}
\title{Facial structure of the cone of\\ nonnegative ternary quartics}
\begin{abstract}
In this work we will discuss the facial structure of the cone of nonnegative ternary quartics with real coefficients. 
We will establish an equivalence relation on the set of all faces, which preserves certain properties like dimension 
or the number of common zeros. 
Moreover we will give a complete list of equivalence classes and we will discuss their dimension and inclusions
between them. Eventually we will be able to present a complete lattice of all faces of this particular cone.  
\end{abstract}
\maketitle
\thispagestyle{empty}

\section{Introduction}
We write $\proj^k=\proj(\C^{k+1})$ for the $k$--dimensional projective space over $\C$. Let $\proj^k(\R)$ denote the real 
points of $\proj^k$.  Let $\form_{k}\subseteq \R[x,y,z]$ be the vector space of real polynomials in three variables 
homogeneous of degree $k$.
For $I \subseteq \form_k$ an arbitrary subset we write  
$\zero(I):=\{ x \in \proj^{2}(\R) \colon \ f(x)=0, \ \forall f \in I\}$ 
for the set of all \emph{real} zeros of $I$.
We set
\[
\psd = \left\{ p \in \form_{4} \colon \ p(x) \geq 0, \ \text{for all} \ \ x \in \proj^{n}(\R) \right\} 
\]
and call $P_{3,4}$ the \emph{cone of nonnegative ternary quartics}. Evidently $\psd$ is closed under addition and 
multiplication with nonnegative reals. Hilbert (\cite{hilbert})
proved in 1888 that every polynomial in $\psd$ is a sum of squares of quadratic forms. 
Yet still little is known about the facial structure of cones of nonnegative forms. 
In this work we will explore the facial structure 
of $\psd$ and show that $\psd$ has up to the natural $\GL$--action only finitely many faces. We will give a complete list of 
these, and discuss issues as exposednes and inclusions between faces.

\section{Basic concepts} 
\subsection{Convex Cones} 
\begin{defi}
\label{facedef}
A set $C \subseteq \R^k $ is called a \emph{convex cone}, if it is closed under addition and multiplication with nonnegative 
reals. A subcone $F \subseteq C$ is called \emph{face} of $C$, if for any $a,b 
\in C$, whenever $a+b \in F$, we must have $a,b \in F$. For a face $F \subseteq C$ we define the dimension of $F$ by 
\[ \dim F := \dim \spann(F). \] An element $u \neq 0$ contained in a one-dimensional face is called extremal element, for  
 $C=\psd$ it is also called extremal form.
Let $V$ be a vector subspace of $\R^k$. We call $F$ \emph{full} in $V$, if $V=\spann(F)$, or equivalently 
if $F=C\cap V$ and there exists $f \in F$ such that for all $g \in V$ there exists $\epsilon >0$ with $f+\epsilon g \in C$. 
To every $f \in C$ we attach the set 
\[ F_f := \left\{ g \in C \colon  \ f-\epsilon g \in C \ \text{for some} \  \epsilon>0 \right\}. \]
A face $F \subseteq C$ is called exposed if there exists a linear functional $L:\R^k \longto \R$, such that $L$ is 
nonnegative on $C$ and $F=\ker(L)\cap C$. In this case we say that $L$ defines $F$.  
\end{defi} 

\begin{bem}
It is easy to check that $F_f$ is a face of $C$. It is the unique face containing $f$ as an inner point. Since every face of 
$C$ has an inner point, every face $F \subseteq C$ can be written as $F_f$ for a suitable $f \in C$. 
By construction $g \in C$ is an inner point of $F_f$ if and only if $F_g=F_f$. 
If $C = \psd$ then $f \in F_g$ implies $\zero(g)\subseteq \zero(f)$. In particular $F_f=F_g$ only if 
$\zero(f)=\zero(g)$. 
\end{bem} 

\subsection{Infinitely near points} \mbox{} \\
Let $f \in \form_{4}$, $p \in \proj^2$ with $\ord_p(f)\geq 2$. We choose affine coordinates $x,y$ in $p$ (i.e. $p$ 
becomes the origin). We choose projective coordinates $z,w$ of $\proj^1$ and write $X:=\Var(xw-yz) \subset \A^2 \times 
\proj^1$ for the blowup of $\proj^2$ in $p$ with the corresponding projection $\pi \colon X \longto \A^2$.
We denote the ideal of the exceptional line by $\qpid:=(x,y) \subset \R[x,y,z,w]$. For $f$ as above we define the ideal 
\[ T:=\left((\pi^*(f),xw-yz):\qpid^2 \right), \]  
which is homogeneous in $z,w$ and defines a curve in $X$. 

\begin{bem}
If we have $\ord_p(f)=2$ in above construction then $\Var(T)$ is the strict transform of $f$. If $\ord_p(f)>2$ then 
$\Var(T)$ is the union of the strict transform and the exceptional line. 
\end{bem}  

\begin{defi}
A (first order) real infinitely near point of $f$ in $p$ is a real point on $\Var(T+q)\subseteq \proj^1$. We denote the set 
of all first order real infinitely near points of $f$ in $p$ by $\inp_f(p)$. 
\end{defi}

\begin{defi}
For a line $l$ through $p$ we write $l_p \in \proj^1$ for the intersection of the strict transform of $l$ in $p$ with 
the exceptional line $\Var(\qpid)$. Each point on the exceptional line can be written as $l_p$ for a suitable line $l$.  
\end{defi} 

\begin{defi}
For a fixed open embedding $\phi\colon A \lhook\joinrel\relbar\joinrel\rightarrow X$, i.e. affine coordinates on $X$, we 
denote by $\bu{f}$ the generator of $\phi^*(T)$.  
\end{defi}

\begin{bem}
\label{cbu}
In this paper we will always choose the the open embedding 
\begin{align*}
\A^2 &\longto X,\\
(x,y) &\longmapsto (x,xy;1:y). 
\end{align*}
Under this choice we have $\bu{f}(x,y)=\frac{f(x,xy)}{x^2}$. 
\end{bem}

\section{Constructing faces}
\begin{defi}\mbox{}
\begin{enumerate}
\item For a form $q \in \form_k$ and $p \in \proj^2(\R)$ with $q(p)=0$ we set 
\[ \mathbb{T}_p(q):=\{ x \in \proj^2 \colon \ \langle x, \nabla q(p) \rangle=0    \}; \]
the \emph{tangent space of $q$ in $p$}.
\item For a subset $J \subseteq H_k$ we define the vector space  
\[ J^{[2]}:= \spann\left(\{fg \colon \ f,g \in J\}\right)\subseteq H_{2k}. \]
\item 
For a face $F \subseteq \psd$ we set
\[ J_F := \{ q \in \form_2 \colon \ q^2 \in F \} \subseteq \form_2. \]
\end{enumerate}
\end{defi}

\begin{lemma}
For an arbitrary face $F \subseteq \psd$ the set $J_F$ is a vector space.
\end{lemma}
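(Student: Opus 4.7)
The plan is to verify the two vector space axioms directly, using that $F$ is a \emph{face} (not just a subcone) together with the elementary algebraic identity
\[ (q_1+q_2)^2 + (q_1-q_2)^2 = 2q_1^2 + 2q_2^2. \]
Closure under $0$ and under scalar multiplication is immediate: $0^2 = 0 \in F$ since $F$ is a cone, and for $\lambda \in \R$ and $q \in J_F$ we have $(\lambda q)^2 = \lambda^2 q^2 \in F$ because $\lambda^2 \geq 0$ and $F$ is closed under multiplication by nonnegative reals.

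The only real content is closure under addition. Given $q_1, q_2 \in J_F$, I would argue as follows. First, $(q_1+q_2)^2$ and $(q_1-q_2)^2$ both lie in $\psd$, since they are squares of real forms and hence pointwise nonnegative. Second, by the parallelogram identity,
\[ (q_1+q_2)^2 + (q_1-q_2)^2 = 2q_1^2 + 2q_2^2 \in F, \]
since $q_1^2, q_2^2 \in F$ and $F$ is closed under addition and multiplication by $2$. Now I invoke the defining property of a face from Definition \ref{facedef}: if $a+b \in F$ with $a, b \in \psd$, then $a, b \in F$. Applied with $a = (q_1+q_2)^2$ and $b = (q_1-q_2)^2$, this forces $(q_1+q_2)^2 \in F$, so $q_1+q_2 \in J_F$.

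There is no genuinely hard step; the proof rests entirely on the identification of the right algebraic identity to expose both $(q_1+q_2)^2$ and $(q_1-q_2)^2$ as summands of an element already known to lie in $F$, at which point the face axiom does the work. If anything, the only subtlety worth pointing out is that the argument uses the face property in an essential way: merely knowing that $F$ is a subcone closed under addition would not suffice, because without the face condition one could not conclude that $(q_1+q_2)^2$ alone belongs to $F$.
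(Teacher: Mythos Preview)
Your proof is correct and follows essentially the same approach as the paper: both use the parallelogram identity $(q_1+q_2)^2+(q_1-q_2)^2=2(q_1^2+q_2^2)$ to exhibit $(q_1\pm q_2)^2$ as summands of an element of $F$, and then invoke the face property. Your version is slightly more explicit in noting that the summands lie in $\psd$ and that the face axiom is genuinely needed, but the argument is the same.
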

\begin{proof}
Obviously $J_F$ is closed under scalar multiplication. For $q_1,q_2 \in J_F$ we have  
$2(q_1^2+q_2^2)=(q_1+q_2)^2+(q_1-q_2)^2 \in F$, which proves that $q_1\pm q_2 \in J_F$. 
\end{proof}

Let $p \in \proj^2(\R)$ and $l$ a real line containing $p$. 

\begin{defi}
For $p,l$ as above we define following sets 
\begin{align*}
 &F_{p} := \left\{ f \in \psd \colon \ f(p)=0 \right\} \\
 &I_{p} := \left\{ f \in \form_{4}  \colon \   \ord_p(f)\geq 2 \right\} \\
 &F_{(p,l)}:= \left\{ f \in \psd \colon \ f(p)=0,\, \bu{f}(l_p)=0 \right\} \\
 &I_{(p,l)}:= \left\{ f \in \form_{4} \colon  \ \ord_p{f} \geq 2, \, \ord_{l_p}(\bu{f})\geq 2 \right\},  
\end{align*}
where all occurring blow-ups are taken in $p$. 
\end{defi} 

\begin{lemma}
\samepage
\label{fpexp}
Let $p \in \proj^2(\R)$ and $l$ a real line containing $p$. Then 
\begin{enumerate}
\item $F_{p}, F_{(p,l)} \subseteq \psd$ are faces.
\item $I_p \cap \psd = F_p$ and $I_{(p,l)} \cap \psd = F_{(p,l)}$. 
\item $J_F^{[2]}=\spann(F)$. 
\item The face $F_{p}$ is exposed.
\end{enumerate}
\end{lemma}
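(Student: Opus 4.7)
The plan is to handle the four claims in order of difficulty, with (iv) essentially by inspection, (i) and (ii) reduced to the fact that a nonnegative form vanishing at a real point must have order $\geq 2$ there, and (iii) following from Hilbert's theorem together with polarization.

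For (iv), I would take the evaluation functional $L\colon\form_4\to\R$, $L(f):=f(p)$. It is linear, nonnegative on $\psd$, and its zero set on $\psd$ equals $F_p$ by definition. For (i) and (ii) I would treat both cases in parallel. If $f,g\in\psd$ with $f+g\in F_p$, then $f(p),g(p)\ge 0$ sum to zero, so both vanish; hence $F_p$ is a face. A nonnegative form with $f(p)=0$ has $p$ as a local minimum on $\proj^2(\R)$, forcing $\nabla f(p)=0$ and $\ord_p(f)\ge 2$; this yields $F_p=I_p\cap\psd$.

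For $F_{(p,l)}$, any such $f$ already satisfies $\ord_p(f)\ge 2$, so the blowup $\bu{f}$ is a well-defined polynomial. In the chart fixed by Remark \ref{cbu}, $\bu{f}(x,y)=f(x,xy)/x^2\ge 0$ for $x\ne 0$, hence $\bu{f}\ge 0$ on all of $\R^2$ by continuity. The vanishing $\bu{f}(l_p)=0$ then makes $l_p$ a local minimum, forcing $\ord_{l_p}(\bu{f})\ge 2$; the reverse implication is clear, so $F_{(p,l)}=I_{(p,l)}\cap\psd$. The face property of $F_{(p,l)}$ follows because the blowup is linear in $f$: from $f+g\in F_{(p,l)}$ one first gets $f,g\in F_p$ by the $F_p$-argument, and then $\bu{f}(l_p)+\bu{g}(l_p)=0$ with both summands $\ge 0$, forcing both to vanish.

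For (iii), one direction uses polarization: for $q_1,q_2\in J_F$ one has $q_1q_2=\tfrac{1}{4}\bigl((q_1+q_2)^2-(q_1-q_2)^2\bigr)$, and since $J_F$ is a vector space by the previous lemma, both $q_1\pm q_2$ lie in $J_F$, so their squares lie in $F$ and $q_1q_2\in\spann(F)$. Hence $J_F^{[2]}\subseteq\spann(F)$. The reverse inclusion uses Hilbert's theorem: every $f\in F\subseteq\psd$ can be written as $f=q_1^2+q_2^2+q_3^2$ with $q_i\in\form_2$; since $F$ is a face and each $q_i^2\in\psd$, all three squares must lie in $F$, so each $q_i\in J_F$, giving $f\in J_F^{[2]}$, and then $\spann(F)\subseteq J_F^{[2]}$.

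The main point to watch is the chart-dependence in (i)/(ii): the definition of $\bu{f}$ is tied to the chart in Remark \ref{cbu}, so one must either assume $l_p$ lies in that chart (changing affine coordinates on $\proj^2$ if necessary) or verify that the statement is chart-independent. Once this is settled, all four claims reduce to short arguments, and the only substantive input beyond elementary convex-geometric reasoning is Hilbert's sum-of-squares theorem, used in the second inclusion of (iii).
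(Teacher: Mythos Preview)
Your proposal is correct and follows essentially the same approach as the paper: evaluation functional for (iv), the local-minimum argument to force even vanishing order for (i)/(ii), and polarization plus Hilbert's theorem for (iii). The only cosmetic difference is that for the inclusion $\spann(F)\subseteq J_F^{[2]}$ the paper starts from an arbitrary $f\in\spann(F)$ written as $f_1-f_2$ with $f_i\in F$, whereas you show $F\subseteq J_F^{[2]}$ and then take spans; both are fine. Your remark about chart-dependence at $l_p$ is a point the paper leaves implicit.
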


\begin{proof}\mbox{}
\begin{enumerate} 
\item 
Let $f,g \in F_{(p,l)}$ and $\lambda\geq 0$. Obviously we have $(f+g)(p)=0$, $(\lambda f)(p)=0$, as well as 
$\bu{(f+g)}(l_p)=\bu{f}(l_p)+\bu{g}(l_p)=0$ and $\bu{(\lambda f)}(l_p)=\lambda \bu{f}(l_p)=0$. 
This shows, that $F_p$ and $F_{(p,l)}$ are convex subcones of $\psd$. Now suppose $f,g \in \psd$ such that 
$f+g \in F_{(p,l)}$. Since $f,g$ are nonnegative and $(f+g)(p)=0$, we must have $f(p)=g(p)=0$. Also $\bu{f},\bu{g}$ are 
nonnegative polynomials, which shows, that $(\bu{f}+\bu{g})(l_p)=0$ implies $\bu{f}(l_p)=0$ and $\bu{g}(l_p)=0$.
\item
Suppose $f \in F_{(p,l)}$. It suffices to show that $f \in I_{(p,l)}$. Since $f$ is nonnegative, $p$ is a local minimum of 
$f$ 
and therefore $\ord_p(f) \geq 2$. The same argument shows that $\ord_{l_p}(\bu{f})$ is even and positive and therefore at 
least 2. 
\item 
Let $f,g \in J_{F}$. Then $4fg=(f+g)^2-(f-g)^2 \in \spann(F)$ and therefore $J_{F}^{[2]} \subseteq \spann(F)$. 
Now suppose $f \in \spann(F)$. Then $f=f_1-f_2$ with $f_1,f_2 \in F$. Since every nonnegative ternary quartic is a sum 
of squares, there exist $q_1,\ldots,q_r \in H_2$ such that $f=q_1^2+\dots +q_t^2 - q_{t+1}^2 -\dots - q_r^2$. Since 
$F$ is a face, we have $q_1^2,\ldots,q_r^2 \in F$ and therefore $q_1,\ldots,q_r \in J_F$. 
\item 
Evaluation in $p$ is a linear functional $L_p\colon \form_{3,4}\longto \R$, which is nonnegative on $\psd$. Obviously 
we have $\ker(L_p)\cap\psd = F_{p}$. 
\end{enumerate}   
\end{proof}

\begin{con}
\label{conv1}
We will write shortly $J_p$ and $J_{(p,l)}$ instead of $J_{F_p}$ and $J_{F_{(p,l)}}$ respectively. 
Similarly we write $J_f$ instead $J_{F_f}$ for $f \in \psd$. 
\end{con}

\begin{lemma}
\label{Jcomp}
Let $p,l$ be as above 
\begin{enumerate}
\item $J_{p}=\left\{ q \in \form_2 \colon \ q(p)=0 \right\}$,
\item $J_{(p,l)}=\left\{ q \in \form_2 \colon \ q(p)=0, \, l \subseteq \mathbb{T}_p(q) \right\}$,
\end{enumerate}
\end{lemma}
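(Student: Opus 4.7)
The plan is to unfold the definition $J_F = \{q \in \form_2 \colon q^2 \in F\}$ and translate the vanishing conditions defining $F_p$ and $F_{(p,l)}$ back into conditions on $q$, using Lemma~\ref{fpexp}(ii) to pass between $F_p \leftrightarrow I_p$ and $F_{(p,l)} \leftrightarrow I_{(p,l)}$ whenever convenient.

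Part (i) is immediate: $q \in J_p$ iff $q^2 \in F_p$ iff $q^2(p)=0$ iff $q(p)=0$.

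For part (ii), I would fix affine coordinates $x,y$ with $p$ at the origin; rotating if necessary, I can take $l$ to be the line $y = y_0 x$ (the vertical case is parallel), so that $l_p$ has coordinate $y = y_0$ on the exceptional line in the chart of Remark~\ref{cbu}. By definition, $q^2 \in F_{(p,l)}$ means $q(p)=0$ together with $\bu{q^2}(l_p)=0$. Assuming $q(p)=0$, write $q = \ell + Q$ in the affine chart, with $\ell$ the linear part and $Q$ the quadratic part of $q$ at $p$. Since $\ell$ and $Q$ are homogeneous, a direct substitution gives
\[
\bu{q^2}(x,y) \;=\; \frac{q(x,xy)^2}{x^2} \;=\; \bigl(\ell(1,y) + x\,Q(1,y)\bigr)^2,
\]
and evaluating at $(0,y_0)$ yields $\bu{q^2}(l_p) = \ell(1,y_0)^2$.

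It remains to identify the condition $\ell(1,y_0)=0$ with $l \subseteq \mathbb{T}_p(q)$. Applying Euler's formula to the quadratic form $q$ with $q(p)=0$ shows that the $z$-partial of $q$ vanishes at $p$, so $\mathbb{T}_p(q)$ is cut out in the affine chart precisely by $\ell$. Hence $l \subseteq \mathbb{T}_p(q)$ iff $\ell$ vanishes identically along $l$ iff $\ell(1,y_0)=0$; the degenerate case $\ell \equiv 0$, equivalently $\ord_p(q)\geq 2$, corresponds to $\nabla q(p)=0$ and so $\mathbb{T}_p(q)=\proj^2$, and is handled consistently on both sides. The only real obstacle is this last step, where one must carefully match the projective tangent space to the affine linearization coming out of the blow-up formula; everything else is routine substitution.
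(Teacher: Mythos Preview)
Your proof is correct and follows essentially the same approach as the paper's. The only cosmetic difference is that the paper normalizes coordinates so that $l$ becomes the line $y=0$ (i.e.\ your $y_0=0$), which collapses the computation to $\bu{q^2}(0,0)=(a+by)^2_{\mid y=0}=a^2$ with $\nabla q(p)=(a,b)^t$; your version keeps a general slope $y_0$ and compensates by invoking Euler's formula to match the affine linear part $\ell$ with the projective tangent line, a step the paper leaves implicit.
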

\begin{proof}
While the first assertion is obvious, the second one needs some explanation.
We choose affine coordinates in $p$ such that $l$ becomes the line $y=0$. 
For $q \in \form_2$ with $q(p)=0$ and $\nabla q(p)=(a,b)^t$ we have 
\begin{align*}
q^2 \in F_{(p,l)} &\iff q(p)=0 \text{ and }  \bu{q^2}(0,0)=0  \\
                  &\iff q(p)=0 \text{ and } (a+by)^2_{\mid y=0}=0\\
                  &\iff q(p)=0 \text{ and } a=0 \\
                  &\iff q(p)=0 \text{ and } \nabla q(p) \perp (1,0) \\
                  &\iff q(p)=0 \text{ and } l \subseteq \mathbb{T}_p(q).
\end{align*}  
\end{proof}

\begin{defi}
Let $\S= \{s_1,\ldots,s_n,(p_1,l_1),\ldots,(p_m,l_m) \}$ 
where $s_1,\ldots,s_n$, $p_1,\ldots,p_m \in \proj^2(\R)$ are  pairwise 
different and $l_1,\dots,l_m$ are real lines such that $p_i \in l_i$ for $i=1,\ldots,m$.  
We attach following sets to $\S$.  
\begin{align*}
 &F_{\S} := \bigcap_{i=1}^{n} F_{s_i} \cap \bigcap_{j=1}^{m} F_{(p_j,l_j)} \\
 &I_{\S} := \bigcap_{i=1}^{n} I_{s_i} \cap \bigcap_{j=1}^{m} I_{(p_j,l_j)} \\
\end{align*}
\end{defi}

\begin{cor}
For $\S$ as above we have 
\begin{enumerate}
\item $F_\S \subseteq \psd$ is a face. 
\item $I_\S \cap \psd = F_\S$
\end{enumerate}
\end{cor}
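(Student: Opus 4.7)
The plan is to observe that both assertions follow formally from Lemma \ref{fpexp} together with the elementary fact that arbitrary intersections of faces of a convex cone are again faces, and that intersection commutes with intersection.

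For (i), I would first recall the general fact: if $\{F_\alpha\}$ is any family of faces of a convex cone $C$, then $\bigcap_\alpha F_\alpha$ is again a face. Indeed, it is clearly a convex subcone, and if $a+b \in \bigcap_\alpha F_\alpha$ with $a,b\in C$, then $a+b \in F_\alpha$ for every $\alpha$, so the face property of each $F_\alpha$ forces $a,b\in F_\alpha$ for every $\alpha$, whence $a,b\in\bigcap_\alpha F_\alpha$. Now by Lemma \ref{fpexp}(i), each $F_{s_i}$ and each $F_{(p_j,l_j)}$ is a face of $\psd$, so their finite intersection $F_\S$ is a face of $\psd$.

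For (ii), I would simply distribute intersection with $\psd$ and apply Lemma \ref{fpexp}(ii) termwise:
\begin{align*}
I_\S \cap \psd
&= \Bigl(\bigcap_{i=1}^n I_{s_i}\Bigr)\cap\Bigl(\bigcap_{j=1}^m I_{(p_j,l_j)}\Bigr)\cap \psd \\
&= \bigcap_{i=1}^n (I_{s_i}\cap \psd) \cap \bigcap_{j=1}^m (I_{(p_j,l_j)}\cap \psd) \\
&= \bigcap_{i=1}^n F_{s_i} \cap \bigcap_{j=1}^m F_{(p_j,l_j)} \;=\; F_\S.
\end{align*}

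There is no real obstacle here: the corollary is a purely formal packaging of Lemma \ref{fpexp}, whose substantive content (nonnegativity forces each summand to vanish at $p$, evenness of the order of $\bu{f}$ at $l_p$) has already been established. The only thing worth flagging is that the definition of $F_\S$ and $I_\S$ is arranged precisely so that these distributive manipulations go through with no further work.
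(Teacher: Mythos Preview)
Your proof is correct and matches the paper's approach: the paper states this as a corollary with no proof, treating it as an immediate formal consequence of Lemma \ref{fpexp}, which is precisely what you have spelled out. Your explicit verification that intersections of faces are faces and the termwise application of Lemma \ref{fpexp}(ii) simply make the implicit reasoning visible.
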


Similarly as in Convention \ref{conv1} we write shortly $J_\S$ instead of $J_{F_\S}$. 

\begin{bem}
The idea to prescribe certain configurations of zeros is due to Blekherman \cite{Blekh}. He used this approach to 
give an estimate of the dimension of certain faces of an \emph{arbitrary} cone of nonnegative forms. Moreover he was able 
to point out dimensional differences between faces of nonnegative forms and corresponding faces of the cone of sum of 
squares.  
\end{bem}

\section{Fullness}
In the last section we have seen, that $\spann(F_\S) \subseteq I_\S$. In general this inclusion is strict as the following
example shows. Since $\dim(I_\S)$ can be computed explicitly, we are interested in the case for which equality holds.
 
\begin{example}
Let $\S=\left\{(1\colon 0 \colon 0),(0 \colon 1 \colon 0),(1\colon 1 \colon 0) \right\}$. 
Each form $f \in F_{\S}$ meets the line $\Var(z)$ in three double points. Therefore Bezout's 
theorem implies that $f$ can be written as $f=zq$ with a cubic $q$. 
Since $f$ is positive, the gradient of $f$ must vanish on all real points of the line $\Var(z)$. 
Thus $q$ vanishes on $\Var(z)$ and hence $z \mid q$. 
Thus each form in $F_{\S}$ is divisible by $z^2$.  
On the other hand we have $g:=xy^2z-xyz^2 \in I_{\S}$ and therefore $\spann(F_{\S}) \subsetneq I_{\S}$. 
\end{example}

According to definition \ref{facedef} we have $\spann(F_\S)=I_\S$ if and only if there exists $f \in F_\S$ such that for all 
$g \in I_\S$ there exists an $\epsilon>0$ such that $f+\epsilon g \in \psd$. We will now find a criterion on $\S$, whether 
such an $f$ can be constructed. 

\begin{defi}
Let $f \in \form_4$ and $p \in \proj^2(\R)$ such that $\ord_p(f) \geq 2$. We choose affine coordinates in $p$ and define the 
discriminant $\Disk_f(y)$ of $f$ in $p$ as the polynomial $\Disk_f(y)=(f_3^2-4f_2 f_4)(1,y)$ where $f_2,f_3,f_4$ are the 
homogeneous components of $f$. Using $\bu{f}$ according to \ref{cbu} we conclude that $\Disk_f(y)$ is the 
discriminant with respect to $x$ of the quadratic polynomial $\bu{f}(y) \in \R[x]$. 
\end{defi}
 
\begin{lemma}
Let $p \in \proj^2(\R)$ and $l$ a real line through $p$. Suppose $f \in F_{(p,l)}$ and $g \in I_{(p,l)}$. If 
\begin{enumerate}
\item $\ord_p(f)=2$
\item $(\Disk_f)^{''}(0)\neq 0$
\end{enumerate}
then there exists a neighborhood $U\subseteq \proj^2(\R)$ of $p$ and $\epsilon>0$ such that $f+\epsilon g$ is nonnegative 
on $U$.  
\end{lemma}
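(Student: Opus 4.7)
The plan is to use the blow--up to reduce the claim to a local positivity estimate at $l_p$, where condition (ii) supplies the crucial positive--definiteness. I choose affine coordinates so that $p$ is the origin and $l=\{y=0\}$; then by Remark~\ref{cbu} we have $\bu{f}(x,s)=f_2(1,s)+xf_3(1,s)+x^2f_4(1,s)$, and the point $l_p$ becomes $(0,0)$ in this chart.

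The first step is to extract structure from $f\in F_{(p,l)}$ with $\ord_p f=2$. Nonnegativity together with order exactly two forces the quadratic part $f_2$ to be PSD and nonzero, while $\bu{f}(l_p)=0$ gives $f_2(1,0)=0$; together these force $f_2(x,y)=ay^2$ with $a>0$. The condition $\ord_{l_p}\bu{f}\ge 2$ forces in addition $f_3(1,0)=0$, so writing $c_1$ and $d_0$ for the coefficients of $x^2y$ in $f_3$ and of $x^4$ in $f_4$, the quadratic part of $\bu{f}$ at $(0,0)$ is $as^2+c_1xs+d_0x^2$. A short expansion gives $\Disk_f(s)=(c_1^2-4ad_0)s^2+O(s^3)$, whence $(\Disk_f)''(0)=2(c_1^2-4ad_0)$. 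The key observation is that since $\bu{f}\ge 0$ has a zero at $(0,0)$, its Hessian there must be PSD, i.e.\ $c_1^2-4ad_0\le 0$; condition (ii) upgrades this to strict inequality. Hence the quadratic part of $\bu{f}$ at $l_p$ is \emph{positive definite}, so there exist $C,r>0$ with $\bu{f}(x,s)\ge C(x^2+s^2)$ on the disc $\{x^2+s^2\le r^2\}$.

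Since $g\in I_{(p,l)}$, the polynomial $\bu{g}$ has order at least $2$ at $l_p$, hence $|\bu{g}(x,s)|\le M(x^2+s^2)$ on the same disc by Taylor's theorem. For $\epsilon\le C/M$ we get $\bu{f}+\epsilon\bu{g}\ge 0$ there; multiplying by $x^2$ this translates into $(f+\epsilon g)(x,y)\ge 0$ on a wedge $\{|x|\le r,\,|y|\le r|x|\}$ about the line $l$. To cover the remaining ``vertical'' directions I would pass to the complementary chart $(t,y)\mapsto(ty,y)$, in which a direct computation gives $f(ty,y)/y^2=a+yf_3(t,1)+y^2f_4(t,1)$, taking the positive value $a$ on the whole exceptional line $\{y=0\}$. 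By continuity and compactness, for any $T>0$ there is $\delta>0$ such that this expression exceeds $a/2$ on $\{|t|\le T,\,|y|\le\delta\}$, while $g(ty,y)/y^2$ is bounded there; thus for a possibly smaller $\epsilon$, $(f+\epsilon g)(ty,y)\ge\tfrac{a}{4}y^2\ge 0$ on this strip.

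Choosing $T>1/r$ ensures that the wedge from the first chart and the strip from the second together contain a rectangular neighborhood $\{|x|,|y|\le\min(r,\delta)\}$ of the origin in $(x,y)$--coordinates; taking the smaller of the two $\epsilon$'s then produces the desired neighborhood $U$ of $p$ and a single $\epsilon>0$ with $f+\epsilon g\ge 0$ on $U$. I expect the main obstacle to be the identification of condition (ii) with positive--definiteness of the quadratic part of $\bu{f}$ at $l_p$; once this is in place, the local domination of $\bu{g}$ by $\bu{f}$ and the patching between the two blow--up charts are essentially routine.
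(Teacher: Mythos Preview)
Your proof is correct and follows the same overall architecture as the paper's: choose affine coordinates with $p$ the origin and $l=\{y=0\}$, treat a wedge around $l$ via the blow--up chart, and cover the complementary directions using that the quadratic part of $f$ is $ay^2$ with $a>0$.

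The one genuine difference lies in how condition~(ii) is exploited on the wedge. The paper works directly with the one--variable discriminant: it writes
\[
\Disk_{f+tg}(y)=\Disk_f(y)+tQ(y)+t^2P(y)
\]
with $y^2\mid P,Q$, and uses that $\Disk_f\le 0$ has a strictly negative second derivative at $0$ to keep the perturbed discriminant nonpositive for small $t$ and $|y|$; nonnegativity of $\bu{h}(\cdot,y)$ then follows from the sign of the discriminant together with $(f_2+\epsilon g_2)(1,y)=cy^2\ge 0$. You instead unpack condition~(ii) as the statement that the Hessian of $\bu{f}$ at $l_p$ is positive definite (via the computation $(\Disk_f)''(0)=2(c_1^2-4ad_0)$ combined with semidefiniteness of that Hessian), after which the wedge estimate becomes the standard ``a positive definite quadratic part dominates any order--two perturbation'' argument applied to $\bu{g}$.

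Your reformulation is arguably cleaner: it makes the geometric meaning of condition~(ii) explicit (nondegeneracy of the singularity of $\bu{f}$ at $l_p$) and reduces the analysis to a routine Taylor estimate in two variables. The paper's route has the virtue of staying entirely within the discriminant language in which the hypothesis is phrased, at the cost of a slightly more delicate perturbation argument. For the complementary region, your second--chart computation and the paper's argument via $h(\lambda v)=c\lambda^2 w^2+\lambda^3 R(v,\lambda)$ on the unit circle are the same idea in different coordinates.
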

\begin{proof}
We choose affine coordinates in $p$ so that $l$ is the line $y=0$. According to remark \ref{cbu} we have 
$l_p=0$.  We write $f_k=\sum_{i=0}^k 
a_{i,k-i} x^i y^{k-i}$ and $g_k=\sum_{i=0}^k b_{i,k-i} x^i y^{k-i}$ for the homogeneous components of $f$ and $g$ 
respectively. Since $f,g \in I_{(p,l)}$ we conclude 
\begin{align*}
a_{20}=a_{11}=a_{30}=0, \\
b_{20}=b_{11}=b_{30}=0.
\end{align*}
Thus $y \mid f_3, g_3$ and $y^2 \mid f_2,g_2$. Since $\bu{f}$ is nonnegative, $\Disk_f \leq 0$ and 
$y=0$ is a locale maximum of $D_f$. Fix $t>0$, then we have 
\[ \Disk_{f+tg}(y)=\Disk_f(y) + t^2P(y) + tQ(y) \]
for suitable $P,Q \in \R[y]$ with $y^2 \mid P(y),Q(y)$. Considering $(\Disk_f)^{''}(0)\neq 0$ 
we find $\delta,\epsilon_0 >0$ such that 
$\Disk_{f+t g}(y)\leq 0$ as long as $|y| \leq \delta$ and $0<t<\epsilon_0$.   
Since $f$ is nonnegative and $\ord_p(f)=2$ we must have $a_{02}>0$. We choose $0<\epsilon<\epsilon_0$ such that 
$a_{02}+\epsilon b_{02}=:c$ is positive, which implies that $f_2+\epsilon g_2=cy^2$ is a nonnegative form. 
We set $h:=(f+\epsilon g)$. 
Let $v:=(u,w) \in \R^2$ with $|v|=1$. We have  
\[ h(\lambda v)=c\lambda^2 w^2 + \lambda^3 R(v,\lambda) \]
for a suitable $R \in \R[v,\lambda]$. Hence there exists $\mu>0$ such that $h(\lambda v)\geq 0$ if $\lambda\leq \mu$ and 
$\left|\frac{w}{u}\right|\geq \delta$. Since $\Disk_h(y) \leq 0 $ for $y \leq \delta$ we conclude $h(x,y)\geq 0$ if 
$\left|\frac{y}{x}\right|\leq \delta$. 
Thus $h$ is nonnegative on $B_\mu(0)$.   
\end{proof}

\begin{theorem}
\label{mainthm}
Let $\S=\{s_1,\ldots,s_n,(p_1,l_1),\ldots,(p_m,l_m)\}$ be as above. If there exists $f \in F_\S$ such that  
\begin{enumerate} \item 
$\zero(f)=\{s_1,\ldots,s_n,p_1,\ldots,p_m\}$, 
\item 
$\inp_f(s_i)=\emptyset$ ,  
\item 
$\ord_{p_j}(f)=2$,
\item 
$(\Disk_f)^{''}(0)\neq 0$ in $p_j$,
\end{enumerate} 
for all $i=1,\ldots,n$ and all $j=1,\ldots,m$
then $\spann(F_\S)=I_\S$ and in particular $\dim I_\S=\dim F_\S$.
\end{theorem}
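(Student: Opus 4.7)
The plan is to verify the equivalent condition noted earlier in this section: since the inclusion $\spann(F_\S) \subseteq I_\S$ is already known, we need only exhibit an $f \in F_\S$ such that for every $g \in I_\S$ there is some $\epsilon > 0$ with $f + \epsilon g \in \psd$, and the claim is simply that the given $f$ is such a witness. I would establish this by a patching argument on the compact space $\proj^2(\R)$: cover it by finitely many open sets on each of which nonnegativity of $f + \epsilon g$ is secured by its own argument, then take $\epsilon$ smaller than all the resulting thresholds.

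Away from $\zero(f) = \{s_1,\ldots,s_n,p_1,\ldots,p_m\}$ the form $f$ is strictly positive, so on the complement of small open neighborhoods of these finitely many zeros it is bounded below by a positive constant; since $g$ is likewise bounded on this compact region, $f + \epsilon g \geq 0$ holds there for $\epsilon$ small enough. Near each $p_j$, conditions (iii) and (iv) are exactly the hypotheses of the preceding lemma, which supplies a neighborhood $V_j$ of $p_j$ and a threshold $\eta_j > 0$ such that $f + \epsilon g \geq 0$ on $V_j$ for all $0 < \epsilon < \eta_j$.

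The new ingredient is local nonnegativity at each $s_i$, where hypothesis (ii) enters. The remark following the construction of $\Var(T)$ shows that $\ord_{s_i}(f) > 2$ would force $\Var(T)$ to contain the whole exceptional line and hence $\inp_f(s_i) = \proj^1(\R) \neq \emptyset$; so (ii) forces $\ord_{s_i}(f) = 2$, and under this the condition $\inp_f(s_i) = \emptyset$ asserts that the tangent cone $f_2$ has no real zeros on $\proj^1$, hence $f_2$ is positive definite (the sign being pinned down by $f \geq 0$). Writing $f = f_2 + f_3 + f_4$ in local affine coordinates at $s_i$ one obtains $f(v) \geq c|v|^2$ on a small ball $U_i$ about $s_i$, while any $g \in I_{s_i}$ satisfies $|g(v)| \leq M|v|^2$ there by its order-$2$ vanishing; hence $f + \epsilon g \geq (c - \epsilon M)|v|^2 \geq 0$ for $\epsilon < c/M$. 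Intersecting all local thresholds produces the desired $\epsilon$, and the equality of dimensions then follows. The main obstacle I anticipate is precisely this third step, namely extracting $\ord_{s_i}(f) = 2$ together with positive-definiteness of the tangent cone from the abstract condition $\inp_f(s_i) = \emptyset$, as this is what produces the uniform quadratic lower bound capable of absorbing the quadratic-order perturbations from $I_\S$.
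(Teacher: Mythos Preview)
Your proposal is correct and follows essentially the same route as the paper: a compactness/patching argument on $\proj^2(\R)$, using the preceding lemma at each $p_j$, positive definiteness of the tangent cone $f_2$ at each $s_i$ (deduced exactly as you do from $\inp_f(s_i)=\emptyset$), and strict positivity elsewhere. Your handling of the $s_i$ case is in fact slightly more explicit than the paper's, which simply asserts that positive definiteness of $f_2$ yields the required local $\epsilon_t$ and $U_t$ without writing out the quadratic bounds.
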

\begin{proof}
Let $g \in I_\S$ and $t \in \proj^2(\R)$. If $f(t)\neq 0$ there exists $\epsilon_t>0$ and an open  $U_t \subseteq 
\proj^2(\R)$ containing $t$ such that $f + \epsilon_t g$ is nonnegative on $U_t$. If $t \in \{p_1,\ldots,p_m\}$ the 
preceding lemma implies the  existence of an $\epsilon_t>0$ and an open
$U_t \subseteq \proj^2$ containing $t$ such that $f+\epsilon t g$ 
is nonnegative on $U_t$. Suppose now $t \in \{s_i,\ldots,s_n\}$. Since $\inp_f(t)=\emptyset$ 
the form $f_2$ (in $t$) is positive definite. 
Hence there exists $\epsilon_t$ and $U_t$ like above, such that $f+\epsilon_t g$ is nonnegative on $U_t$. Since 
$\proj^2(\R)$ is compact we 
can cover $\proj^2(\R)$ by finitely many of these $U_t$. Choosing $\epsilon$ minimal among all occurring $\epsilon_t$ shows 
that
$f+\epsilon g$ is nonnegative on $\proj^2(\R)$ and thus $g \in \spann(F_\S)$.     
\end{proof}

Since the above condition on the discriminant of $f$ is rather bulky, we provide a more algebraic respectively algorithmic 
version of above theorem in the following section. 

\section{Algebraic conditions}
Let $\S=\{s_1,\ldots,s_n,(p_1,l_1),\ldots,(p_m,l_m)\}$ be defined as in the preceding section. 
Since $J_\S^{[2] }=\spann(F_\S)$ we can 
work rather with $J_\S$ than with $F_\S$.

\begin{defi}
For a vector space $J\subseteq \form_2$ and $p \in \proj^2(\R)$ we define 
\begin{align*}
G_J(p) &= \{ \nabla q(p) \colon \ q \in J \} \subseteq \proj^2(\R) \\
E_J(p) &= \{ q \in J \colon \ \ord_p(q)\geq 2 \}
\end{align*}
\end{defi}

\pagebreak[3]

\begin{theorem}
\label{algcon}
Let $J \subseteq J_\S$. If 
\begin{enumerate}
\item $\zero(J)=\{s_1,\ldots,s_n,p_1,\ldots,p_m\}$
\item $\dim G_J(s_i) = 1$ 
\item $G_J(p_j) \neq \emptyset$ 
\item $\zero(E_J(p_j)) \cap l_j = \{p_j\}$ 
\end{enumerate}
for all $i=1,\ldots,n$ and all $j=1,\ldots,m$, then $F_\S$ is full in $I_\S$. 
Moreover for any basis $q_1,\ldots,q_k$ of $J$ the form $q_1^2+\dots+ q_k^2$ is an inner form of $F_\S$. 
\end{theorem}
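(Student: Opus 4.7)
The natural candidate is $f := q_1^2 + \cdots + q_k^2$, and the plan is to verify that $f$ satisfies the four hypotheses of Theorem~\ref{mainthm}. This gives fullness directly, and a short extra argument then identifies $f$ as an inner form of $F_\S$.

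Condition~(1) of Theorem~\ref{mainthm} is immediate: $\zero(f) = \zero(J) = \{s_1, \ldots, s_n, p_1, \ldots, p_m\}$ by hypothesis~(i). For condition~(2), at each $s_i$ I would pass to affine coordinates; since every $q_i$ has vanishing constant term there, the degree-$2$ part of $f$ at $s_i$ equals $\sum_i \langle \nabla q_i(s_i), (x,y)\rangle^2$. Hypothesis~(ii) says the gradients $\nabla q_i(s_i)$ fill the projective tangent line at $s_i$, i.e.\ span $\R^2$ in affine coordinates, so this sum of squares of linear forms is positive definite. Since $\inp_f(s_i)$ consists of the real zeros of $\bu{f}(0,y) = f_2(1,y)$, we conclude $\inp_f(s_i) = \emptyset$.

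At each $p_j$ choose affine coordinates so that $p_j$ becomes the origin and $l_j = \{y=0\}$. Lemma~\ref{Jcomp} then forces each basis vector to have the shape
\[ q_i = b_i y + \alpha_i x^2 + \beta_i xy + \gamma_i y^2. \]
A direct expansion gives $f_2 = (\sum_i b_i^2)\,y^2$; hypothesis~(iii) produces some $q \in J$ with $\nabla q(p_j) \neq 0$ and hence some $b_i \neq 0$, so $f_2 \neq 0$ and $\ord_{p_j}(f) = 2$, giving condition~(3). For condition~(4), setting $B = \sum_i b_i\alpha_i$, $C = \sum_i b_i^2$, $A = \sum_i \alpha_i^2$, the same expansion yields
\[ \Disk_f(y) = 4 y^2 (B^2 - CA) + O(y^3), \]
hence $(\Disk_f)''(0) = 8(B^2 - CA)$. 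By Cauchy--Schwarz this quantity is $\leq 0$, with equality iff the tuples $(b_i)$ and $(\alpha_i)$ are proportional. Hypothesis~(iii) supplies an index with $b_i \neq 0$; hypothesis~(iv) supplies some $q \in E_J(p_j)$ -- which has $b = 0$ -- whose $x^2$-coefficient $\alpha$ is nonzero, since otherwise every point of $l_j$ would lie in $\zero(E_J(p_j))$. The two tuples are therefore not proportional, Cauchy--Schwarz is strict, and $(\Disk_f)''(0) \neq 0$. I expect this Cauchy--Schwarz step -- verifying that (iii) and (iv) together preclude proportionality for every choice of basis -- to be the main obstacle.

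With all four hypotheses verified, Theorem~\ref{mainthm} yields $\spann(F_\S) = I_\S$, which is fullness. For the inner-form assertion, note that $I_\S$ is a linear subspace and that the proof of Theorem~\ref{mainthm} shows that for every $g \in I_\S$ there exists $\epsilon > 0$ with $f + \epsilon g \in \psd$, hence $f + \epsilon g \in I_\S \cap \psd = F_\S$. Applied to $-g$ this places every $g \in I_\S = \spann(F_\S)$ in $F_f$, so $F_f = F_\S$ and $f$ is an inner form of $F_\S$.
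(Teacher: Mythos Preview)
Your argument is correct and follows essentially the same route as the paper: set $f=\sum q_i^2$ and verify the four hypotheses of Theorem~\ref{mainthm}. The only cosmetic difference is that for $(\Disk_f)''(0)\neq 0$ the paper expands the Lagrange identity $\bigl(\sum b_ic_i\bigr)^2-\bigl(\sum b_i^2\bigr)\bigl(\sum c_i^2\bigr)=-\sum_{i<j}(b_ic_j-b_jc_i)^2$ explicitly, whereas you invoke Cauchy--Schwarz; both reduce to the linear independence of the vectors $(b_i)$ and $(\alpha_i)$, and your justification of that independence from (iii) and (iv) is in fact more detailed than the paper's. One small wording fix: in the last sentence you say the argument ``places every $g\in I_\S$ in $F_f$'', but $F_f\subseteq\psd$, so what you actually want (and what your argument shows) is that every $h\in F_\S$ lies in $F_f$, since $-h\in I_\S$ gives $f-\epsilon h\in\psd$.
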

\begin{proof}
Let $q_1,\ldots,q_k$ be a basis of $J$ and set $f:=q_1^2+\dots+q_k^2$. We show that $f$ satisfies all conditions of 
Theorem \ref{mainthm}. The first condition implies $\zero(f)=\{s_1,\ldots,s_n,p_1,\ldots,p_m\}$. Fix $s \in 
\{s_1,\ldots,s_n\}$ and choose affine coordinates in $s$.  The second condition implies $k\geq 2$ and we can assume 
that $\nabla q_1(s):=(a_1,b_1)^t$ and $\nabla q_2(s):=(a_2,b_2)^t$ are linearly independent. We may assume that 
$f=q_1^2+q_2^2$. We show that $\inp_f(s)=\emptyset$. We have 
\[ \bu{g}(0,y)=(a_1^2 + a_2^2)+ 2(a_1b_1 + a_2 b_2)y + (b_1^2+b_2^2)y^2. \]
This polynomial has discriminant 
\[  4(a_1b_1+a_2b_2)^2 - 4(a_1^2+a_2^2)(b_1^2+b_2^2)=-(a_1b_2-a_2b_1)^2= - 
\left| \begin{array}{cc}
a_1& b_1 \\
a_2 & b_2 
\end{array} 
\right|^2 < 0. \] Moreover $(b_1^2+b_2^2)\neq 0$ and thus $\inp_f(s)=\emptyset$. 
Fix $(p,l) \in \{(p_1,l_1)\ldots,(p_m,l_m)\}$ and choose affine coordinates in $p$ such that $l$ is the line $\Var(y)$.
Since $\bu{f}(0,0)=0$ we must have $\nabla q_i(p)=(0,b_i)$ for $i=1,\ldots,k$ and suitable $b_i \in \R$.
Condition (iii) implies that there exists $i \in \{1,\ldots,k\}$ such that $b_i \neq 0$ and hence $\ord_p(f) = 2$. 
We write $q_i(x,y)=b_i y + c_i x^2 + d_i xy + e_i y^2$ with $c_i,d_i,e_i \in \R$ and $i=1,\ldots,k$. Condition (iv) implies 
that the vectors $b:=(b_1,\ldots,b_k)$ and $c:=(c_i,\ldots,c_k)$ are linearly independent. On the other hand we have 
\[ \Disk_f(y)=(f_3^2-4f_2f_4)(1,y) \] and thus 
 \begin{align*}
(\Disk_f)''(0) &= 8 \left( \sum_{i=1}^k b_i c_i \right)^2 - 8 \left(\sum_{i=1}^k b_i^2\right) \left(\sum_{i=1}^k c_i^2 
\right) \allowdisplaybreaks[1] \\ 
 &= \sum_{i,j=1}^k \left( b_i b_j c_i c_j - b_i^2 c_j^2 \right)\allowdisplaybreaks[1] \\
 &=\sum_{i,j=1}^k \left( b_i c_j(c_i b_j - b_i c_j) \right)  \\
 &= \sum_{i<j} \left( c_i b_j - b_i c_j\right)^2. 
\end{align*} 
The last sum runs over all $2\times 2$--Minors of the Matrix $(b,c)$. Therefore $(\Disk_f)''(0)\neq 0$. This shows, that 
$\spann(F_\S)=I_\S$ and $f$ is an inner form of $F_\S$. 
\end{proof}

\begin{bem}
Theorem \ref{mainthm} as well as theorem \ref{algcon} can be generalized to quartics with an arbitrary finite number of 
variables. 
\end{bem}

\section{Equivalence}
Let $\G:=\GL$ denote the real projective linear group on $\proj^2$. 
If we identify positive multiples of forms in $\form_4$ (resp. $\form_2$) the group $\G$ operates on $\form_4$ (resp. on  
$\form_2$) by
\[ (\sigma \cdot f)(x):= f(\sigma^{-1}(x)), \quad f \in \form_4 \text{ (resp. }  f\in \form_2), \sigma \in \G.  \]
For $\S=\{s_1,\ldots,s_n,(p_1,l_1),\ldots,(p_m,l_m)\}$ as before and $\sigma \in \G$ we define 
\[ \sigma(\S)=\{\sigma(s_1),\ldots,\sigma(s_n),(\sigma(p_1),\sigma(l_1)),\ldots,(\sigma(p_m),\sigma(l_m) \}. \]

\begin{lemma}
For $\S$ as above, $\sigma \in \G$ and $F \subseteq \psd$ a face we have 
\begin{enumerate}
\item $\sigma(F)$ is a face of $\psd$ and $\dim(F)=\dim(\sigma(F))$. 
\item $\sigma(F_\S)=F_{\sigma(\S)}$, $\sigma(I_\S)=I_{\sigma(\S)}$ and $\sigma(J_\S)=J_{\sigma(\S)}$. 
\item $F$  exposed implies $\sigma(F)$ exposed. 
\end{enumerate}
\end{lemma}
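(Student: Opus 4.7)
The plan is to treat the three parts in order, with the heavy lifting being a single observation: the action of $\G$ on $\form_4$ given by $(\sigma\cdot f)(x)=f(\sigma^{-1}x)$ is a linear automorphism of $\form_4$ that preserves the subset $\psd$. Preservation follows because $\sigma^{-1}$ is a bijection of $\proj^2(\R)$, so $(\sigma\cdot f)(x)\geq 0$ for every real $x$ iff $f(y)\geq 0$ for every real $y$. Once we have this, (i) is immediate: a linear automorphism of $\R^k$ that preserves the cone sends subcones to subcones and faces to faces (apply the face condition to $\sigma^{-1}(a)+\sigma^{-1}(b)=\sigma^{-1}(a+b)$), and the span of $\sigma(F)$ is $\sigma(\spann(F))$, so dimensions agree.

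For (ii), the clean way is to argue that each of the conditions defining $F_\S$, $I_\S$ and $J_\S$ is $\G$-equivariant in $(p,l)$. For the point conditions this is just $(\sigma\cdot f)(\sigma(p))=f(p)$, and similarly $\ord_{\sigma(p)}(\sigma\cdot f)=\ord_p(f)$ since $\sigma$ is a local isomorphism at $p$. For the tangential condition at $(p,l)$, I would observe that the blow-up of $\proj^2$ at $p$ together with the point $l_p$ on the exceptional divisor is intrinsic: a projective isomorphism $\sigma\colon\proj^2\to\proj^2$ sending $p$ to $\sigma(p)$ lifts canonically to an isomorphism of blow-ups that maps the exceptional divisor to the exceptional divisor and the point $l_p$ to $\sigma(l)_{\sigma(p)}$. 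Consequently $\bu{f}(l_p)=0\iff\bu{\sigma\cdot f}(\sigma(l)_{\sigma(p)})=0$, and the same for orders. This gives $\sigma(F_\S)=F_{\sigma(\S)}$ and $\sigma(I_\S)=I_{\sigma(\S)}$. The statement about $J_\S$ then follows formally: $q\in J_\S\iff q^2\in F_\S\iff \sigma\cdot(q^2)=(\sigma\cdot q)^2\in\sigma(F_\S)=F_{\sigma(\S)}\iff\sigma\cdot q\in J_{\sigma(\S)}$.

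For (iii), if $F$ is exposed by the linear functional $L\colon\form_4\to\R$, set $L':=L\circ\sigma^{-1}$, where $\sigma^{-1}$ denotes the induced linear map on $\form_4$. Then $L'$ is linear, and for any $g\in\psd$ we have $\sigma^{-1}\cdot g\in\psd$ by the first paragraph, so $L'(g)=L(\sigma^{-1}\cdot g)\geq 0$. Moreover, $g\in\ker(L')\cap\psd$ iff $\sigma^{-1}\cdot g\in\ker(L)\cap\psd=F$ iff $g\in\sigma(F)$. Thus $L'$ exposes $\sigma(F)$.

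The only nontrivial step is the equivariance of the blow-up construction underlying the definitions of $F_{(p,l)}$ and $I_{(p,l)}$; everything else is formal manipulation with a linear automorphism that preserves $\psd$. The cleanest implementation is to check equivariance once in a pair of affine charts adapted to $p$ and $\sigma(p)$ using the explicit embedding of Remark \ref{cbu}, where the claim reduces to the chain rule applied to $\sigma$.
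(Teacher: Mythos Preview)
Your proposal is correct and follows essentially the same route as the paper: part (i) via linearity and $\sigma(\psd)=\psd$, part (iii) by composing the exposing functional with the linear automorphism induced by $\sigma$, and part (ii) by equivariance of the defining conditions. The only difference in emphasis is in (ii): you phrase the tangential condition via functoriality of the blow-up and then point to the explicit chart computation from Remark~\ref{cbu} as the cleanest implementation, whereas the paper goes directly to that coordinate computation (choosing adapted affine coordinates at $p$ and $\sigma(p)$ so that $f(x,y)$ and $(\sigma f)(x',y')$ literally coincide); your added line deriving $\sigma(J_\S)=J_{\sigma(\S)}$ from $q^2\in F_\S$ is a detail the paper leaves implicit.
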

\begin{proof}
The first assertion is obvious, since $\sigma$ is linear and invertible and satisfies $\sigma(\psd)=\psd$. 
The second assertion is obvious if $m=0$, thus it is enough to show the claim for $\S=\{(p,l)\}$.
First we notice that $\ord_p(f)=\ord_{\sigma(p)}(\sigma(f))$ for all $f \in H_4$.
We choose affine
representatives of $p$ and $\sigma$, which we will also denote by $p$ and $\sigma$ respectively. We write $p'=\sigma(p) \in 
\R^3$ and choose $q$ such that $l=\spann(p,q)$. For $q':=\sigma(p')$ we have $\sigma(l)=\spann(p',q')$. 
Let further be $r \notin l$. Using $r$ and $\sigma(r)$ respectively we construct affine coordinates $x,y$ in $p$ such that 
$l$ becomes the line $\Var(y)$ and affine coordinates $x',y'$ in $p'$ such that $\sigma(l)$ becomes the line $\Var{y'}$ 
respectively. In this coordinates we compute $f$ and $\sigma(f)$ respectively and obtain
\begin{align*}
 f(x,y) &:= f(p+x(p-q)+yr) \\
 (\sigma f)(x',y') &:= (\sigma f)(p'+x'(p'-q') + y' \sigma(r)) \\
                &= f(\sigma^{-1}(p'+x'(p'-q')+y'\sigma(r))) \\
                &=f(p+x'(p-q)+y'r) 
\end{align*}
which makes apparent, that $\ord_{l_p}(\bu{f})=\ord_{\sigma(l)_{\sigma(p)}}(\bu{\sigma(f)})$. 
\item 
Let $L$ be a linear functional which defines $F$. If we fix an affine representative for $\sigma$ we have a
linear map $\varphi \colon \form_{3,4} \longto \form_{3,4}$ given by $f \mapsto \sigma f$.  
Then $L \circ \varphi$ is a linear functional, which is nonnegative on $\psd$. For $f \in \psd$ we have 
\[
f \in \ker(L\circ \varphi) \iff \sigma f \in \ker(L).  
\]
\end{proof}

\begin{defi}
Let $F_1,F_2 \subseteq \psd$ be faces. We call $F_1$ equivalent to $F_2$ if there exists $\sigma \in \G$ such that
$\sigma(F_1)=F_2$. In this case we write $F_1 \sim F_2$. Evidently $\sim$ is an equivalence relation. Denote $\mathcal{F}$ 
the quotient set.  For $[F] \in \mathcal{F}$ we define $\dim([F])=\dim(F)$, due to the preceding lemma this definition does 
not depend on the choice of a particular representative. 
\end{defi}

Henceforth we will restrict our considerations on equivalence classes of faces. 

\begin{defi}
For $k\in \N$ we set 
\[ \V_k := \left\{ (L_1,\ldots,L_k) \colon \ L_i \subset \proj^2(\R)
 \text{ subspace, } -1 \leq \dim L_1 \leq \ldots \leq \dim L_k 
\leq 2 \right\}. \]
With the convention $g\cdot \emptyset=\emptyset$ for $g \in \G$ the group $\G$ operates on $\V_k$ entrywise.
\end{defi}

The next 
lemma is a well-known fact from linear algebra.

\begin{lemma}
\label{orbits}
Following sets form (complete) $\G$--orbits in $\V_4$. 
\begin{align*}
& \left\{ (p_1,p_2,p_3,p_4) \colon \ p_i \textup{ points, no 3 of them collinear } \right\} \\
& \left\{ (p_1,p_2,l_1,l_2) \colon \ p_i \textup{ points, } l_i \textup{ lines, } p_i \in l_i,\; p_i \notin l_j, \, i\neq j 
\right\} \\
& \left\{ (p_1,p_2,p_3,l) \colon \ p_1, p_2, p_3 \textup{ non-collinear points, } l \textup{ a line, }p_1 \in l, \, p_2,p_3 
\notin l \right\} 
\end{align*}
In particular $\G$ operates transitively on each of these sets projected on a $\V_k$, with $1\leq k\leq 4$.  
\end{lemma}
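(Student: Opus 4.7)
The plan is to reduce each of the three claims to the classical fact that $\G = \GL$ acts simply transitively on projective frames of $\proj^2(\R)$, or equivalently transitively on ordered triples of non-collinear points with the diagonal torus (of real dimension $2$) as stabilizer. This follows by sending any non-collinear triple to $[1{:}0{:}0], [0{:}1{:}0], [0{:}0{:}1]$ and using the remaining diagonal freedom. Case (i) is then just the classical frame statement.

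For case (ii), I would first extract a canonical non-collinear triple from the configuration. Since $p_1 \notin l_2$ and $p_2 \notin l_1$, the lines are distinct, so $p_3 := l_1 \cap l_2$ is a well-defined point lying in neither of $\{p_1, p_2\}$. If $p_1, p_2, p_3$ were collinear, the joining line would contain $p_1$ and $p_3$ and hence equal $l_1$, forcing $p_2 \in l_1$, a contradiction. Thus the assignment $(p_1, p_2, l_1, l_2) \mapsto (p_1, p_2, p_3)$ is a $\G$-equivariant bijection onto ordered non-collinear triples (inverse: $l_i := \overline{p_i p_3}$), so transitivity on case (ii) is inherited from transitivity on non-collinear triples.

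For case (iii) I would proceed in two steps. First, using transitivity on non-collinear triples, move $(p_1, p_2, p_3)$ to the reference vertices $[1{:}0{:}0], [0{:}1{:}0], [0{:}0{:}1]$; the image of $l$ is then a line through $[1{:}0{:}0]$ missing the other two vertices, so it has an equation of the form $y + cz = 0$ with $c \in \R^\times$. Second, the diagonal stabilizer acts on this parameter by $\diag(1, \lambda, \mu) \colon c \mapsto (\lambda/\mu)\,c$, which is transitive on $\R^\times$; composing with a suitable diagonal element produces the required $\sigma \in \G$. The final assertion of the lemma is then immediate, since the projection $\V_4 \to \V_k$ is $\G$-equivariant and the image of a single orbit under an equivariant map is a single orbit.

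The main obstacle is really the incidence bookkeeping in case (ii), namely verifying that the hypotheses $p_i \in l_i$ and $p_i \notin l_j$ for $i \neq j$ do force $(p_1, p_2, p_3)$ to be non-collinear; once that reduction is in place, everything else is standard linear algebra of projective frames.
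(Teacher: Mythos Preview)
Your argument is correct. The paper does not supply a proof of this lemma; it simply records it as ``a well-known fact from linear algebra,'' and your reduction to projective frames together with the diagonal stabilizer is precisely an explicit version of that standard fact.
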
 

Obviously $\V_4$ splits in far more orbits, but only the mentioned ones are of interest for us.

\begin{bem}
According to above orbits, it is useful to keep following pictures in mind,

\setlength{\unitlength}{0.4cm}
\begin{picture}(32,10)(1,-2)
\put(2,0){\circle*{0.3}}
\put(3,2){\circle*{0.3}}
\put(7,4){\circle*{0.3}}
\put(3,6){\circle*{0.3}}

\put(15,0){\line(1,2){3.5}}
\put(19,0){\line(-1,2){3.5}}
\put(16,2){\circle*{0.3}}
\put(18,2){\circle*{0.3}}

\put(27,0){\line(1,1){6}}
\put(29,2){\circle*{0.3}}
\put(32,2){\circle*{0.3}}
\put(28,5){\circle*{0.3}}
\end{picture}

which display the above configurations $\S$, on which $\G$ operates transitively. 
\end{bem}

\begin{lemma}
\label{reduction}
Let $f \in \psd$. 
\begin{enumerate}
\item Suppose $f$ has three zeros on the real line $l$, then $f$ is a product 
of $l^2$ and a nonnegative quadratic form.  
\item Let $p,q \in \proj^2(\R)$ distinct and $l=p \vee q$. Suppose $f(p)=f(q)=0$. 
If $l_p \in \inp_p(f)$ then $f$ vanishes on the entire line $l$. 
In particular $f$ is a product of $l^2$ and a nonnegative quadratic form. 
\end{enumerate}
\end{lemma}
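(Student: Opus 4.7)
My plan is to treat the two parts in turn, reducing (ii) to (i) after establishing that $f$ vanishes identically on $l$.

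For (i), I first show $f|_l \equiv 0$. Since $f \in \psd$, each real zero of $f$ has even order, hence order at least $2$; restricting $f$ to $l$ yields a univariate polynomial of degree at most $4$ with three distinct zeros of multiplicity at least $2$, and $6 > 4$ forces this restriction to vanish identically. Thus $l \mid f$. To upgrade to $l^2 \mid f$, I choose affine coordinates with $l = \{y = 0\}$, write $f = y \cdot g$ with $g$ cubic, and observe that if $g(x_0, 0) \neq 0$ for some $x_0 \in \R$ then $yg$ changes sign across $y = 0$ near $(x_0, 0)$, contradicting $f \geq 0$. Hence $g(x, 0) \equiv 0$, so $y \mid g$ and $f = l^2 h$ for some $h \in \form_2$. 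Off $\zero(l)$ we have $h = f/l^2 \geq 0$, and $h \geq 0$ everywhere then follows by continuity.

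For (ii), the plan is to derive $f|_l \equiv 0$ and invoke (i). I work in affine coordinates centered at $p$ with $l = \{y = 0\}$, so that the second point becomes $(a, 0)$ for some $a \neq 0$, and expand $f = f_2 + f_3 + f_4$ into homogeneous components (there are no lower terms since $\ord_p(f) \geq 2$). Using the convention from Remark \ref{cbu}, a direct substitution yields
\[ \bu{f}(x, y) = f_2(1, y) + x f_3(1, y) + x^2 f_4(1, y), \]
and the line $l = \{y = 0\}$ corresponds to the origin $l_p = (0, 0)$ of this chart. The hypothesis $l_p \in \inp_p(f)$ therefore becomes $f_2(1, 0) = 0$, which says that the coefficient of $x^2$ in $f_2$ vanishes. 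Consequently $f(x, 0)$ has zero coefficients in degrees $0$, $1$, and $2$, giving a zero of order at least $3$ at $x = 0$. Combined with $\ord_{(a,0)}(f) \geq 2$, which forces a zero of order at least $2$ at $x = a$, we see that $f(x, 0)$ is a polynomial of degree at most $4$ with at least $5$ zeros counted with multiplicity; hence $f(x, 0) \equiv 0$ and $f$ vanishes on all of $l$, reducing (ii) to (i).

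The only delicate point is the correct identification of $l_p$ inside the fixed affine chart of the blow-up, which is precisely what Remark \ref{cbu} pins down; everything else reduces to elementary polynomial degree-counting together with the sign argument in (i).
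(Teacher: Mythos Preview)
Your proof is correct but follows a different route from the paper's. The paper invokes Hilbert's theorem to write $f = g_1^2 + \dots + g_k^2$ with $g_i \in \form_2$, and then applies B\'ezout to each $g_i$: in (i) each $g_i$ meets $l$ in three points, hence $l \mid g_i$; in (ii) the hypothesis $l_p \in \inp_p(f)$ forces $\nabla g_i(p)$ to be proportional to $\nabla l$, so $g_i$ and $l$ meet with multiplicity at least $2$ at $p$ and once more at $q$, again giving $l \mid g_i$. You instead argue directly with $f$: degree counting on $f|_l$ (three double zeros in (i), a triple-plus-double in (ii)) forces $f|_l \equiv 0$, and then a sign argument upgrades $l \mid f$ to $l^2 \mid f$. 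Your approach is slightly longer but has the advantage of not relying on the sum-of-squares representation; this is in line with the paper's later remark that the entire facial classification can be carried out without assuming Hilbert's theorem, thereby yielding an independent proof of it. The paper's approach, by contrast, is shorter because B\'ezout applied to the quadratic summands immediately gives divisibility by $l^2$.
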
  
\begin{proof}
Since $f \in \psd$ we can write $f=g_1^2+\dots+g_k^2$ for suitable $g_i \in \form_{2}$. 
For the first statement we note, that $g_1,\ldots,g_k$ vanish in each of the three points on $l$. 
Then Bezout's Theorem implies that each $g_i$ is divisible by $l$ and the claim follows. 
For the second statement we note that $\nabla g_i(p)$ and $\nabla l$ are linearly 
dependent for all $i=1,\ldots,k$. Therefore $g_i$ and $l$ have a at least a double intersection at $p$. Again Bezout's 
Theorem implies that each $g_i$ is divisible by $l$.
\end{proof}

\begin{cor}
\label{type}
Let $F \subseteq \psd$ be a face, then exactly one of the following statements holds 
\begin{enumerate}
\item[$(\mathcal{A})$] $\zero(F)$ contains a real line. 
\item[$(\mathcal{B})$] There exists $\S$ such that $F=F_\S$ and $\zero(F)$ contains no real line.  
\item[$(\mathcal{C})$] $F$ cannot be expressed as $F_\S$ for a suitable $\S$ and $\zero(F)$ contains no real line.   
\end{enumerate}
Moreover if $G \in [F]$ the same statement holds for $G$. 
\end{cor}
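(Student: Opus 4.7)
The plan is to observe that the trichotomy is by construction exhaustive and mutually exclusive for any face $F \subseteq \psd$: either $\zero(F)$ contains a real line, placing $F$ in case $(\mathcal{A})$, or it does not, in which case we further split by whether $F$ admits a presentation $F=F_\S$ for some configuration $\S$ (case $(\mathcal{B})$) or not (case $(\mathcal{C})$). So the first half of the statement — that exactly one of $(\mathcal{A})$, $(\mathcal{B})$, $(\mathcal{C})$ holds — requires nothing beyond unwinding definitions.

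The content of the corollary is the $\G$-invariance, which I would derive from three ingredients already available: (a) the identity $\zero(\sigma f) = \sigma(\zero(f))$, immediate from $(\sigma \cdot f)(x) = f(\sigma^{-1}(x))$; (b) the fact that projective linear maps of $\proj^2(\R)$ send real lines bijectively to real lines; and (c) the lemma at the start of the Equivalence section, which gives $\sigma(F_\S) = F_{\sigma(\S)}$. From (a), writing $\zero(F) = \bigcap_{f \in F} \zero(f)$ and using that $\sigma$ is a bijection, we obtain $\zero(\sigma F) = \sigma(\zero(F))$; combined with (b), the property "contains a real line" is invariant, so $(\mathcal{A})$ is preserved by $\sim$. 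From (c), $F = F_\S$ for some $\S$ if and only if $\sigma(F) = F_{\sigma(\S)}$, so $(\mathcal{B})$ is preserved as well. Case $(\mathcal{C})$ is the negation of both preceding cases and is therefore invariant by elimination.

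I do not foresee a genuine obstacle: the result is a bookkeeping statement asserting that the type is a well-defined invariant of $[F] \in \mathcal{F}$, and all the substantive compatibility between the $\G$-action and the operators $F \mapsto \zero(F)$, $\S \mapsto F_\S$ has already been verified. The only subtle point worth flagging in the write-up is the distinction between $\zero(F)$ as a set associated to the face and the zero sets of individual elements of $F$; handling $\zero(F)$ as the intersection over all $f \in F$ (or equivalently over any generating subset) is what makes step (a) go through cleanly.
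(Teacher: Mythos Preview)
Your proposal is correct. The paper states this corollary without any proof, treating it as an immediate consequence of the definitions together with the equivalence lemma $\sigma(F_\S)=F_{\sigma(\S)}$; your write-up supplies precisely the routine verification the paper leaves implicit.
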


This gives us a partition of the set of all faces of $\psd$ into three parts, which we denote by 
$\typeA$, $\typeB$ and $\typeC$ according to the numeration in corollary \ref{type}. This partition is 
compatible with the equivalence relation on $\psd$.    

In the following sections we will give a complete list of faces of each 
type. In the last section we will eventually discuss inclusions between faces.

\section{Faces \texorpdfstring{in $\typeA$}{of type A}}
Let $F \in \typeA$ and $f$ be an inner form of $F$. 
The next theorem shows that faces in $\typeA$ are well-understood.  

\begin{theorem}[cf. \cite{Bar}, ch. II.12]
\label{psdmatrix}
Let $l$ be a line in $\proj^{2}(\R)$. Then there exists a bijection 
\begin{align*}
\left\{ F_f \colon \ f \in \psd, \ \,  l^2 \mid f \right\} &\longto  \left\{ U \subset \proj^{2}(\R) \colon \ \textup{$U$ 
subspace of $\proj^2(\R)$} \,  \right\} \\
                F_f &\longmapsto \zero\left(\frac{f}{l^2}\right) 
\end{align*}
\end{theorem}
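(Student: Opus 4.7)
The plan is to reduce the statement to the classical face description of the cone of positive semidefinite quadratic forms in three variables, by factoring out $l^2$. Throughout let $q$ denote $f/l^2$.

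First I verify that the map is well-defined. If $f \in \psd$ and $l^2 \mid f$, then the cofactor $q \in \form_2$ is nonnegative: on $\R^3 \setminus \ker(l)$ we have $q = f/l^2 \geq 0$, and this extends to $\R^3$ by continuity. Hence $\zero(f/l^2) = \zero(q)$ is the projectivization of the linear kernel of $q$, which is indeed a subspace of $\proj^2(\R)$.

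The main step is to establish the identification
\[ F_f \;=\; l^2 \cdot F_q := \{\, l^2 q' : q' \in F_q \subseteq \form_2 \,\}, \]
where $F_q$ denotes the face of the cone of psd quadratic forms containing $q$ as an inner point. The inclusion ``$\supseteq$'' is immediate, since $l^2(q - \epsilon q') = f - \epsilon(l^2 q') \in \psd$ whenever $q - \epsilon q'$ is psd. For ``$\subseteq$'' I would take $g \in F_f$, so $f - \epsilon g \in \psd$ for some $\epsilon > 0$. Because $g \geq 0$ and $f$ vanishes on the real points of the line $\Var(l)$, so does $g$; a real quartic whose restriction to $\Var(l)(\R)$ vanishes must vanish on the whole complex line $\Var(l)$, giving $l \mid g$. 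Writing $g = l \cdot h$ with $h$ cubic, the fact that $l$ changes sign across $\ker(l) \subset \R^3$ while $g \geq 0$ forces the continuous function $h$ to vanish on $\ker(l)$ as well, so $l \mid h$ and hence $l^2 \mid g$. Writing $g = l^2 q'$, the inequality $l^2(q - \epsilon q') \geq 0$ transfers by continuity to $q - \epsilon q' \geq 0$ on all of $\R^3$, i.e.\ $q' \in F_q$.

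The theorem then follows by invoking the standard parametrization (Barvinok, loc.\ cit.) of the face lattice of the psd quadratic cone: the assignment $F_q \mapsto \zero(q)$ is a bijection between faces of that cone and linear subspaces of $\proj^2(\R)$. Surjectivity is an easy case check on $\dim U \in \{-1,0,1,2\}$, realized respectively by a positive definite form, a sum of two squares of independent linear forms cutting out the point, the square $m^2$ of the defining form of a line, and the zero quadratic. I expect the main obstacle to be the divisibility ladder $l \mid g \Longrightarrow l^2 \mid g$ extracted purely from the face condition $f - \epsilon g \in \psd$; everything else is bookkeeping around the psd cone structure.
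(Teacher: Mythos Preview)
Your proof is correct and follows essentially the same approach as the paper: factor out $l^2$, identify the resulting cone with positive semidefinite quadratic forms, and invoke Barvinok's parametrization of its faces by kernels. You actually supply more detail than the paper does---in particular the divisibility ladder $l \mid g \Rightarrow l^2 \mid g$ extracted from nonnegativity, which the paper hides in the phrase ``can be identified''.
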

\begin{proof}
Since $f$ is nonnegative, $f/l^2$ is a positive semidefinite quadratic 
form. Therefore the facial structure
of the cone $C:=\{ f \in \psd \colon \   l^2 \mid f \}$ can be identified with the facial structure 
of symmetric positive definite $3 \times 3$--matrices over $\R$. 
In \cite{Bar} it is shown, that faces of this cone can be parameterized by subspaces of $\proj^2(\R)$ in terms of 
simultaneous kernels.  
\end{proof}

Therefore each (unordered) pair $(l,U) \in \V_2$, where $l$ is a line, corresponds 
to a face $F \in \typeA$. Thus we can regard $\typeA$ as a subset of $\V_2$. The $\G$--operation on $\V_2$ restricted to 
$\typeA$ coincides with above operation of $\G$ on $\psd$.  
Let $l,k$ be distinct real lines and $p,q \in \proj^{2}(\R)$ with $p \in l$ and $q \notin l$. Then 
Lemma \ref{orbits} implies that $\typeA\subset \V_2$ consists of six equivalence classes, for which representatives
are given as follows 
\[
F_{\Li_0}:=(l,\emptyset),\; F_{\Li_1} :=(l,q),\; F_{\Li_2} :=(l,p)
\]
\[ 
F_{\Li_3}:=(l,k),\; F_{\Li_4} :=(l,l), \; F_0 =(l,\proj^{2}).
\]

\begin{cor}
The set $\typeA$ splits in six equivalence classes, which are given by $F_{\Li_0}$, $F_{\Li_1}$, $F_{\Li_2}$, $F_{\Li_3}$, 
$F_{\Li_4}$ and $F_0$. Every face in $\typeA$ is exposed. Furthermore we have 
\begin{center}
\begin{tabular}{l|c|c|c|c|c|c}
\emph{Face} $F$ & $F_{\Li_0}$ & $F_{\Li_1}$ & $F_{\Li_2}$ & $F_{\Li_3}$ &$ F_{\Li_4 }$& $F_0$ \\ 
\hline 
$\dim F$ &  6 & 3 & 3 & 1  & 1 & 0  \\
$\dim J_F$ & 3 & 2& 2& 1& 1& 0 \\
\end{tabular}
\end{center} 
\end{cor}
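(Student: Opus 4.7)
The plan is to combine Theorem \ref{psdmatrix} with an elementary orbit classification to enumerate the equivalence classes, then read off the two dimensions from the matrix description and from the square-root characterization of $J_F$, and finally deduce exposedness by realizing each face as an intersection of exposed faces.

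First I would invoke Theorem \ref{psdmatrix} to identify $\typeA$ with the $\G$-set of pairs $(l,U)$, where $l$ is a real line in $\proj^2(\R)$ and $U\subseteq\proj^2(\R)$ is a (possibly empty) linear subspace; the bijection is manifestly $\G$-equivariant. Since $\G$ acts transitively on lines, we can fix $l$ and enumerate the $\G_l$-orbits of subspaces $U$: the possibilities are $U=\emptyset$, $U$ a point off $l$, $U$ a point on $l$, $U$ a line distinct from $l$, $U=l$, and $U=\proj^2$. A straightforward linear-algebra argument (compatible with the projection of Lemma \ref{orbits} to $\V_2$) shows these six configurations form pairwise inequivalent $\G$-orbits, so $\typeA$ splits into exactly the six classes $F_{\Li_0},\dots,F_{\Li_4},F_0$.

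For the two dimensions, write $V\subseteq\R^3$ for the affine cone over $U$. Under Theorem \ref{psdmatrix}, the face attached to $(l,U)$ is isomorphic as a cone to the cone of psd symmetric operators on $\R^3/V$, whose linear span has dimension $\binom{4-\dim V}{2}$; substituting $\dim V\in\{0,1,2,3\}$ recovers the row $6,3,3,1,1,0$. For $J_F$, any $q\in\form_2$ with $q^2=l^2 r\in F$ must factor as $q=ls$ with $s\in\form_1$, and then $r=s^2$ is nonnegative and vanishes on $U$ precisely when $s$ does; hence $J_F=l\cdot\{s\in\form_1:s|_U=0\}$ and $\dim J_F=2-\dim U$, producing the row $3,2,2,1,1,0$.

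Finally, I would show that each face in $\typeA$ is an intersection of $F_{\Li_0}$ with finitely many point-evaluation faces $F_p$, and that such intersections are exposed. The point-evaluation face $F_p$ is exposed by Lemma \ref{fpexp}(iv). The face $F_{\Li_0}$ is exposed by $L=L_{p_1}+L_{p_2}+L_{p_3}$ for three distinct real points $p_1,p_2,p_3\in l$: by Lemma \ref{reduction}(i), an $f\in\psd$ vanishing at three points of $l$ is divisible by $l^2$, so $\ker(L)\cap\psd=F_{\Li_0}$. For a general face one writes $F=F_{\Li_0}\cap\bigcap_i F_{p_i}$ for finitely many $p_i$ chosen so that $r\in\form_2$ psd with $r(p_i)=0$ forces $r|_U=0$ (one point if $U$ is a point, three points of a line if $U$ is a line, again via Lemma \ref{reduction}(i)); the sum of the individual exposing functionals is nonnegative on $\psd$ and vanishes on $\psd$ exactly along $F$. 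The only mild subtlety is verifying this last ``three-point suffices on a line'' step, which is standard Bezout. No step is genuinely difficult; the conceptual work has been done in Theorem \ref{psdmatrix} and Lemma \ref{reduction}.
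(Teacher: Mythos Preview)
Your orbit enumeration and the two dimension counts are correct and follow the paper's approach; the formula $\dim F=\binom{4-\dim V}{2}$ and the identification $J_F=l\cdot\{s\in\form_1:s|_U=0\}$ are exactly what the paper uses (phrased slightly differently).

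The exposedness argument, however, has a genuine gap---and it is the same gap present in the paper's one-line proof. You propose to realise the face attached to $(l,U)$ as $F_{\Li_0}\cap\bigcap_i F_{p_i}$ with the $p_i$ taken from $U$, then expose by summing the corresponding point-evaluation functionals. This works when $U\not\subseteq l$ (i.e.\ for $F_{\Li_0},F_{\Li_1},F_{\Li_3},F_0$), but when $U\subseteq l$ every form in $F_{\Li_0}$ already vanishes on $l\supseteq U$, so the extra constraints $f(p_i)=0$ are vacuous and the intersection collapses to $F_{\Li_0}$ rather than to $F_{\Li_2}$ or $F_{\Li_4}$. The paper's assertion that ``every face in $\typeA$ can be written as a finite intersection of some $F_{p_i}$'' fails for precisely this reason. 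In fact these two faces are not exposed at all: with $l=\Var(z)$ and $p=e_1$ one has $J_{F_{\Li_2}}=\spann(yz,z^2)$, and since $z^2\cdot x^2=(xz)^2$ with $z^2\in J_{F_{\Li_2}}$ but $xz\notin J_{F_{\Li_2}}$, the paper's own Lemma~\ref{exp} (with $f=z^2$, $g=x^2$, $c=d=xz$) shows $F_{\Li_2}$ is non-exposed; the identical computation with $J_{F_{\Li_4}}=\spann(z^2)$ handles $F_{\Li_4}$. So the exposedness clause of the corollary is false as stated, and no argument for it can be completed.
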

\begin{proof}
We first show the statement concerning given dimensions. 
Let $F \in \typeA$ be given by $(l,U) \in \typeA \subseteq \V_2$. 
Theorem \ref{psdmatrix} implies that $\dim(F) =\dim \{ q \in \form_{2} \colon \ q(U)=0 \}$, which can be computed easily. 
Moreover we have $J_\S=\{ lm \colon \ m \in \form_{1}, \, m(U)=0 \}$, which implies $\dim J_\S=2-\dim U$.  
It remains to show that all these faces are exposed. This is clear however, since every face in $\typeA$ can be 
written as finite intersection of some $F_{p_i}$ for suitable $p_i \in \proj^2(\R)$. 
\end{proof}  

\section{Faces \texorpdfstring{in $\typeB$}{of type B}}
For the rest of this paper we set $e_1=(1\colon 0\colon 0)$, $e_2:=(0\colon 1 \colon 0)$, $e_3=(0 \colon 0 \colon 1)$ and 
$e_4=(1 \colon 1 \colon 1)$. By definition each face $F \in \typeB$ is of the form $F_\S$ for 
$\S=\{s_1,\ldots,s_n,(p_1,l_1),\ldots,(p_m,l_m) \}$ and suitable $s_i,p_i,l_i$.  
Since $F \notin \typeA$ we may assume that no three $s_i$ lie on one line and $l_i \cap 
\{s_1,\ldots,s_n,p_1,\ldots,p_m\}=\{p_i\}$ for $i=1,\ldots,m$. 

Suppose now $2m+n\geq 5$. Let $f \in F_S$ and write $f=q_1^2+\dots+q_k^2$. Then 
$q_1,\ldots,q_k$ meet in at least $2m+n$ points (counted with multiplicity).    
Since $F_S \notin \typeA$, Bezout's theorem implies that all $q_1,\ldots,q_k$ are multiples of an indefinite 
regular quadratic form $q$. Since all regular indefinite quadratic forms arise from 
$g:=x^2-y^2+z^2$ by coordinate change, we have $F_\S \sim F_{q^2}$. We denote $F_{q^2}$ by $F_{\Qu}$.

Hence we can assume from now on that $2m+n<4$. 
We first consider the case $m=0$. We set 
\[ \S_1:=\{e_1\},\, \S_2:=\{e_1,e_2\},\,  \S_3:=\{e_1,e_2,e_3\}, \, \S_4:=\{e_1,e_2,e_3,e_4\} \]

\begin{lemma}
For $i=1,\ldots,4$ the face $F_{\S_i}$ is full in $I_{\S_i}$. 
\end{lemma}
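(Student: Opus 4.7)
The plan is to apply Theorem \ref{algcon} with $J := J_{\S_i}$ for each $i \in \{1,\ldots,4\}$. Because $\S_i$ contains no point-line pair, conditions (iii) and (iv) of that theorem are vacuous, so the only remaining obligations are to check (i) $\zero(J_{\S_i}) = \S_i$ and (ii) $\dim G_{J_{\S_i}}(s) = 1$ for every $s \in \S_i$.

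First I would identify $J_{\S_i}$ explicitly via Lemma \ref{Jcomp}(i). The linear conditions $q(s) = 0$ on a generic quadratic $q$ yield
\[ J_{\S_1} = \spann(y^2, z^2, xy, xz, yz), \qquad J_{\S_2} = \spann(z^2, xy, xz, yz), \]
\[ J_{\S_3} = \spann(xy, xz, yz), \qquad J_{\S_4} = \spann(xy - xz,\, yz - xz). \]

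For condition (i), in the cases $i = 1, 2, 3$ it is enough to notice that $J_{\S_i}$ contains a small sub-collection of monomials whose common zero set is already the prescribed point set: $\{y^2, z^2\}$ cuts out $\{e_1\}$, $\{xy, z^2\}$ cuts out $\{e_1, e_2\}$, and $\{xy, xz, yz\}$ cuts out $\{e_1, e_2, e_3\}$. For $i = 4$ a case distinction on the factorisations $x(y-z) = 0$ and $z(y-x) = 0$ recovers precisely the four points $e_1, e_2, e_3, e_4$.

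For condition (ii), at each coordinate point $e_k$ appearing in some $\S_i$ one exhibits two quadratics in $J_{\S_i}$ whose gradients at $e_k$ are linearly independent in $\R^3$. For instance at $e_1$ the quadratics $xy$ and $xz$ have gradients $(0,1,0)$ and $(0,0,1)$, and analogous pairs work at $e_2$ and $e_3$. The only non-coordinate point is $e_4 \in \S_4$, where $\nabla(xy - xz)$ and $\nabla(yz - xz)$ evaluate at $e_4 = (1:1:1)$ to $(0, 1, -1)$ and $(-1, 1, 0)$, which are manifestly independent.

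All these computations are elementary; the one place requiring genuine verification rather than reduction to a simple monomial configuration is the $\S_4$ case, where both the zero set of the pencil $\spann(xy - xz,\, yz - xz)$ and the independence of gradients at $e_4$ must be checked directly. Once both conditions are in place, Theorem \ref{algcon} immediately yields that $F_{\S_i}$ is full in $I_{\S_i}$, and as a bonus an explicit inner form $q_1^2+\dots+q_k^2$ is produced from any basis of $J_{\S_i}$.
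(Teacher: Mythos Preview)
Your proof is correct and follows essentially the same approach as the paper: both compute $J_{\S_i}$ explicitly via Lemma~\ref{Jcomp} and then observe that conditions (i) and (ii) hold while (iii) and (iv) are vacuous since $m=0$. The only cosmetic differences are that you invoke Theorem~\ref{algcon} (the algebraic reformulation) where the paper cites Theorem~\ref{mainthm}, you use the equivalent basis $\{xy-xz,\,yz-xz\}$ for $J_{\S_4}$ in place of the paper's $\{xy-yz,\,yz-xz\}$, and you spell out the gradient and zero-set verifications in more detail than the paper's one-line ``which makes apparent''.
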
 
\begin{proof}
Using Lemma \ref{Jcomp} we obtain 
\begin{align*}
J_{\S_1}&=\spann(y^2,z^2,xy,xz,yz), \\
J_{\S_2}&=\spann(z^2,xy,xz,yz), \\
J_{\S_3}&=\spann(xy,xz,yz), \\
J_{\S_4}&=\spann(xy-yz,yz-xz), \\
\end{align*}
which makes apparent that conditions (i) and (ii) of theorem \ref{mainthm} are satisfied.
\end{proof}

Note that lemma \ref{orbits} implies that each $\S=\{s_1,\ldots,s_n\}$ with $n \leq 4$ and no three $s_i$ on a line is 
$\G$--equivalent to $\S_n$. 

Next we consider the case $1 \leq m \leq 2$. We set 
\begin{align*}
\T_1 &:= \{(e_1,e_1 \vee e_3) \} \\
\T_2 &:=\{e_2, (e_1, e_1 \vee e_3) \} \\
\T_3 &:=\{(e_1,e_1 \vee e_3),(e_2,e_2 \vee e_3) \} \\
\T_4 &:=\{e_2,e_3,(e_1 \vee e_4) \}
\end{align*}

\begin{lemma}
For $i=1,\ldots,4$ the face $F_{\T_i}$ is full in $I_{\T_i}$. 
\end{lemma}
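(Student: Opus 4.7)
The plan is to verify, for each $i$, the hypotheses of Theorem \ref{algcon} with the choice $J := J_{\T_i}$. Concretely, I first compute $J_{\T_i}$ explicitly using Lemma \ref{Jcomp}: the condition $q(p)=0$ kills the pure square coefficient at $p$, while $l\subseteq\mathbb{T}_p(q)$ imposes a single linear relation on the mixed coefficients of $q$ that can be read off after picking affine coordinates in which $l$ becomes a coordinate axis. This yields
\begin{align*}
J_{\T_1} &= \spann(y^2,z^2,yz,xy),\\
J_{\T_2} &= \spann(z^2,yz,xy),\\
J_{\T_3} &= \spann(z^2,xy),\\
J_{\T_4} &= \spann\bigl(x(y-z),\,yz\bigr),
\end{align*}
where for $\T_4$ one has to first identify $e_1\vee e_4=\Var(y-z)$, so the tangency of $q$ at $e_1$ along this line becomes the relation $c_{xy}+c_{xz}=0$ among the coefficients of $q$.

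For each case I would then run through the four conditions of Theorem \ref{algcon}. Condition (i), that the common zero locus of $J_{\T_i}$ equals the prescribed point set, is a direct inspection in $\T_1$, $\T_2$, $\T_3$ (monomial ideals: intersection of $\Var(z)$ with $\Var(x)\cup\Var(y)$, etc.), and for $\T_4$ reduces to the short case analysis of $x(y-z)=yz=0$, giving exactly $\{e_1,e_2,e_3\}$. Conditions (ii) and (iii) amount to evaluating the gradients of the chosen basis elements at the relevant points: at the ordinary points $e_2$ (in $\T_2$) and $e_2,e_3$ (in $\T_4$) two of the basis gradients turn out linearly independent, giving $\dim G_J=1$, and at each infinitely-near locus $(p_j,l_j)$ at least one basis element has non-vanishing gradient, giving $G_J(p_j)\neq\emptyset$.

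Condition (iv) is where the blow-up geometry actually enters. I would single out $E_J(p_j)\subseteq J_{\T_i}$ as the subspace of forms with vanishing gradient at $p_j$: in $\T_1$ this is $\spann(y^2,z^2,yz)$, in $\T_2$ it is $\spann(z^2,yz)$, in $\T_3$ it is $\spann(z^2)$, and in $\T_4$ it is $\spann(yz)$. In every case $\zero(E_J(p_j))$ is a union of coordinate lines that meets $l_j$ only at $p_j$ itself — for $\T_4$ one checks that $\Var(yz)\cap\Var(y-z)=\{e_1\}$.

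I do not expect a real obstacle here: once the four bases are written down, every verification is a one-line inspection of monomial or near-monomial vanishing. The only place where care is required is $\T_4$, since $e_1\vee e_4$ is not a coordinate line, so one must correctly translate the tangency condition at $e_1$ into the relation $c_{xy}+c_{xz}=0$ before listing a basis; after that step, the check proceeds exactly as in the coordinate cases.
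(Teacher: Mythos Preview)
Your proposal is correct and follows the same route as the paper: compute each $J_{\T_i}$ via Lemma \ref{Jcomp} and then verify the four algebraic conditions of Theorem \ref{algcon}. The paper writes out only the case $\T_2$ in detail and says ``similarly'' for the rest (and even mislabels the reference as Theorem \ref{mainthm}), whereas you spell out $E_J(p_j)$ in all four cases; the one cosmetic slip is that for $\T_1$ the set $\zero(E_J(e_1))=\zero(y^2,z^2,yz)=\{e_1\}$ is a single point rather than a ``union of coordinate lines'', but this only makes condition (iv) easier.
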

\begin{proof}
We compute
\begin{align*}
J_{\T_1} &=\spann(y^2,z^2,xy,yz) \\
J_{\T_2} &=\spann(z^2,xy,yz) \\
J_{\T_3} &=\spann(z^2,xy) \\
J_{\T_4} &=\spann(yz,x(y-z)).
\end{align*}
It is enough to check conditions (i)-(iv) of Theorem \ref{mainthm}. For instance consider $\T_2$. We have 
$\zero(z^2,xy,yz)=\{e_1,e_2\}$. Moreover $\nabla (xy)(e_2)$ and $\nabla (yz)(e_2)$ are linearly independent. 
Thus conditions (i) and (ii) are satisfied. Since $\nabla (xy)(e_1) \neq 0$ also condition (iii) is satisfied. 
For condition (iv) consider the form $z^2$, which satisfies $\zero(z^2)\cap (e_1\vee e_3) = \{e_1\}$.
For the remaining cases one can argue similarly.
\end{proof}

We summarize the last results in following theorem 

\begin{theorem}
The set $\typeB$ splits in 10 equivalence classes with respective dimension as follows 
\begin{center}
\begin{tabular}{l|c|c|c|c|c|c|c|c|c|c}
\emph{Face} $F$ & $F_{\emptyset}$ & $F_{\S_1}$ & $F_{\S_2}$ & $F_{\S_3}$ & $F_{\S_4}$ & $F_{\T_1}$ & 
$F_{\T_2}$ & $F_{\T_3}$ & 
$F_{\T_4}$ & $F_\Qu$ \\ \hline 
$\dim F$ & 15 & 12 & 9 & 6 & 3& 9  & 6 &  3 & 3 & 1 \\
$\dim J_F$& 6 &   5  & 4 &  3 & 2& 4  & 3  &  2 & 2  & 1
\end{tabular}
\end{center}
Furthermore $F_{\S_1},F_{\S_2},F_{\S_3},F_{\S_4}$ and $F_{\Qu}$ are exposed faces.  
\end{theorem}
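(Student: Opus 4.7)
The plan is to assemble three previously developed ingredients: the normalization of configurations $\S$, the orbit decomposition from Lemma \ref{orbits}, and the fullness plus dimension computations carried out in the two lemmas immediately above.

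First, I would complete the classification. Every $F\in \typeB$ equals $F_\S$ for a normalized $\S=\{s_1,\ldots,s_n,(p_1,l_1),\ldots,(p_m,l_m)\}$ as described at the start of the section. The case $2m+n\geq 5$ has already been reduced to $F_\Qu$ in the preceding discussion. For $2m+n\leq 4$ I would partition by $(m,n)$: the tuples $(0,0),(0,1),(0,2),(0,3),(0,4)$ yield $F_\emptyset, F_{\S_1},\ldots,F_{\S_4}$ via Lemma \ref{orbits}, and the tuples $(1,0),(1,1),(2,0),(1,2)$ yield $F_{\T_1},F_{\T_2},F_{\T_3},F_{\T_4}$ respectively. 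For $(1,2)$ the three points $s_1,s_2,p$ must be non-collinear, since otherwise Lemma \ref{reduction}(i) would place $F$ in $\typeA$; then Lemma \ref{orbits} produces a single $\G$-orbit. Pairwise inequivalence of the ten representatives follows from distinguishing them by the pair $(\dim F, \#\zero(F))$, which is a $\G$-invariant.

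Second, I would compute the dimensions. The values $\dim J_F$ are read off directly from the explicit bases for $J_F$ exhibited in the proofs of the two lemmas above. For $\dim F$, I would combine Lemma \ref{fpexp}(iii), which gives $\dim F=\dim J_F^{[2]}$, with the fullness statement $\spann(F)=I_\S$ established in those same lemmas; thus $\dim F=\dim I_\S$, which is computed as the codimension of the defining linear conditions on $\form_4$.

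Third, I would verify exposedness. Each $F_{\S_i}$ equals $\bigcap_{j=1}^{i} F_{e_j}$, a finite intersection of exposed faces (Lemma \ref{fpexp}(iv)); since the sum of the corresponding nonnegative defining functionals is again nonnegative on $\psd$ and vanishes exactly on the intersection, the intersection is exposed. For $F_\Qu$, let $q:=x^2-y^2+z^2$; its real zero locus is a non-empty smooth conic, hence infinite, so I can pick five distinct real points $p_1,\ldots,p_5 \in \zero(q)$ and set $L:=L_{p_1}+\cdots+L_{p_5}$. Then $L\geq 0$ on $\psd$, and $L(f)=0$ with $f\in\psd$ forces $f(p_1)=\cdots=f(p_5)=0$. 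Writing $f=\sum g_i^2$, each $g_i$ vanishes at these five points; but two conics sharing five points must share an irreducible component (Bezout), so $q\mid g_i$, hence $q^2\mid f$, placing $f\in F_\Qu$.

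The main obstacle is the exposedness of $F_\Qu$: individual point evaluations only expose the faces $F_{p_k}$, and one needs the Bezout argument above to propagate vanishing at finitely many points of the conic to divisibility by $q$. Everything else is essentially bookkeeping of the case analysis and reading off dimensions from the linear-algebra computations already carried out in the lemmas.
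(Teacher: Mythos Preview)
Your proposal is correct and follows essentially the same route as the paper: reduce to the ten listed representatives via Lemma~\ref{orbits} and the $2m+n\geq 5$ discussion, read off dimensions from the fullness lemmas, and handle exposedness of $F_\Qu$ by picking five points on the conic and invoking B\'ezout. The only cosmetic difference is that you package the inequivalence argument via the invariant $(\dim F,\#\zero(F))$, whereas the paper exhibits explicit forms with the wrong number of zeros; these are the same idea.
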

\begin{proof}
Let $\S=\{s_1,\ldots,s_n,(p_1,l_1),\ldots,(p_m,l_m) \}$ such that $F_\S \in \typeB$. If $n+2m \geq 5$ then $F_\S\sim F_\Qu$. 
If $1\leq n+2m \leq 4$ then Lemma \ref{orbits} implies the existence of $\sigma \in \G$ and $i \in \{1,2,3,4\}$ such that 
$F_{\sigma(S)}=F_{S_i}$ or $F_{\sigma(S)}=F_{T_i}$. Thus $\typeB$ splits in at most 10 equivalence classes. 
The assertions concerning dimensions are easily verified by computing a basis of $I_F$ and $J_F$ respectively. Since 
equivalence preserves dimensions of faces, it remains to show that above faces of same dimension are pairwise 
inequivalent. 
The form $f:=z^4+y^4 \in F_{\T_1}$  has only one real zero and hence $\sigma(f) \notin 
F_{\S_2}$ for all $\sigma \in \G$, which implies $F_{\S_2} \not\sim F_{\T_2}$. Similarly one shows $F_{\S_3} \not\sim 
F_{\T_2}$ 
and $F_{\S_4} \not\sim F_{\T_3},F_{\T_4}$. The form $g:=z^4+x^2y^4 \in F_{\T_3}$ has only 2 real zeros, which implies 
$F_{\T_4} \not\sim F_{\T_3}$. 
Lemma \ref{fpexp} implies that the faces $F_{\S_i}$ for $i=1,2,3,4$ are exposed. Since $F_\Qu$ is equivalent to $F_\S$ 
where $\S$ contains 5 pairwise different real points on an irreducible indefinite conic, also $F_\Qu$ is exposed.   
\end{proof}

\section{Faces \texorpdfstring{in $\typeC$}{of type C}}
Last we will construct all remaining faces. Suppose $F \in \typeC$ and let $f \in F$ an inner form of $F$. 
We have seen that if $\zero(f)$ is an infinite set then either $F \sim F_Q$ or $F \in \typeA$. Hence we may assume
that $\zero(f)$ is finite. If $t \in \zero(f)$ then either $\ord_t(f)=2$ or $\ord_t(f)=4$. 

\subsection{A "degenerate" face}
Suppose $\ord_t(f)=4$. If $f$ had another real zero $s$ then $f$ would intersect the line $s\vee t$ at least 5 times and 
hence $(s\vee t) \subseteq \zero(f)$; a contradiction. By coordinate change we may assume that $\zero(f)=\{e_1\}$. 
Thus the variable $x$ cannot occur in $f$ and therefore $f \in \R[y,z]$. Moreover $f$ has no real zeros on the line 
$\Var(x) \cong \proj^1$. Since $f \in F$ is an inner form of $F$, we must have $\ord_t(g)\geq 4$ for all $g \in F$. 
Conversely let $g \in \form_4$ such that $\ord_t(g)\geq 4$ (i.e. $g\in \R[y,z]$). Since $f$ is strictly positive on 
$\proj^1(\R)$ there exists $\epsilon>0$ such that $f+\epsilon g$ is nonnegative on the real points of $\Var(x)$. Since $x$ 
does not occur in $f +\epsilon g$, we conclude that $f+\epsilon g$ is globally nonnegative. Thus we have 
\[ \spann(F)=\{ g \in \form_4 \colon \ \ord_{e_1}(g)\geq 4 \}= \R[y,z]_4.  \]
We denote the above face by $F_\D$ and conclude 
\[\dim(F_\D)=\dim(\R[y,z]_4)=\binom{5}{4}=5.\]

\subsection{The remaining faces}
From now on we assume that $f$ has zeros only of order 2. Let $\{s_1,\ldots,s_n\}$ be the set of real zeros of $f$ with 
$\inp_f(s_i)=\emptyset$ for $i=1,\ldots,n$ and $p_1,\ldots,p_m$ the remaining real zeros. Let $l_1,\ldots,l_m \subset 
\proj^2$ 
lines such that $(l_i)_{p_i} \in \inp_f(p_i)$ for $i=1,\ldots,m$. As usual we write 
$\S=\{s_1,\ldots,s_n,(p_1,l_1),\ldots,(p_m,l_m)\}$. Since $F \notin \typeB$ we must have $F \subsetneq F_\S$. Instead of 
viewing at $F_f$ we will work with $J_f=J_F$, which carries all information of $F$. Since $F \subsetneq F_\S$ we have 
$J_F \subsetneq J_\S$. Hence we are interested in vector subspaces $J \subsetneq J_\S$ such that $J_f$ for $f$ an inner 
form of $F \in \typeC$.

\begin{lemma}
\samepage
\label{nfdcrit}
Let $f$ and $\S$ be as above and $J=J_f$. Then
\begin{enumerate}
\item $\zero(J)=\{s_1,\ldots,s_n,p_1,\ldots,p_m\}$
\item $\dim G_J(s_i) \geq 1$ 
\item $G_J(p_j) \neq \emptyset$ 
\end{enumerate}
for all $1\leq i \leq n$ and $1\leq j \leq m$. Moreover we have $\zero(E_J(p_j)) \cap l_j \neq \{p_j\}$ for at least 
one $1 \leq j \leq m$.   
\end{lemma}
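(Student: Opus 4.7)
The plan is to extract a sum-of-squares expression for $f$ out of $J = J_f$ and read off each condition from it. By Lemma~\ref{fpexp}(iii), $f$ admits a representation $f = p_1^2 + \cdots + p_r^2$ with every $p_i \in J$. Part~(i) follows immediately: every $q \in J$ gives $q^2 \in F = F_f$ and hence $\zero(f) \subseteq \zero(q)$, so $\zero(f) \subseteq \zero(J)$; conversely $\zero(J) \subseteq \bigcap_i \zero(p_i) = \zero(f)$ because a real sum of squares vanishes exactly when every summand does. Combined with the hypothesis $\zero(f) = \{s_1,\ldots,s_n,p_1,\ldots,p_m\}$, this is~(i).

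For~(ii), I would work in affine coordinates at $s_i$ and write the linear part of each $p_j$ as $a_j x + b_j y$. A direct blow-up computation using Remark~\ref{cbu} gives $\bu{f}(0,y) = \sum_j (a_j + b_j y)^2$, a sum of real squares; this polynomial has a real zero on the exceptional $\proj^1$ (counting the point at infinity via the opposite chart) precisely when the vectors $(a_j, b_j)$ fail to span $\R^2$. Hence $\inp_f(s_i) = \emptyset$ forces the linear parts of the $p_j$'s to span $\R^2$; translating back, the gradients $\nabla p_j(s_i) \in \R^3$ span a two-dimensional subspace lying inside the projective tangent line at $s_i$, so $\dim G_J(s_i) \geq 1$. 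Part~(iii) is an order count: the standing assumption $\ord_{p_j}(f) = 2$ rules out that every $q \in J$ satisfies $\ord_{p_j}(q) \geq 2$, since in that case $\ord_{p_j}(\sum p_i^2) \geq 4$; hence some $q \in J$ has nonzero gradient at $p_j$ and $G_J(p_j) \neq \emptyset$.

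For the moreover clause I would argue by contrapositive of Theorem~\ref{algcon}. The three conditions just verified are precisely conditions~(i)--(iii) of that theorem (and $\dim G_J(s_i) \geq 1$ is automatically equality, since $G_J(s_i)$ lies in the projective tangent line at~$s_i$). If condition~(iv) of Theorem~\ref{algcon} also held, namely $\zero(E_J(p_j)) \cap l_j = \{p_j\}$ for every $j$, then Theorem~\ref{algcon} would produce a basis $q_1, \ldots, q_k$ of $J$ for which $f^\star := q_1^2 + \cdots + q_k^2$ is an inner form of $F_\S$, that is $F_{f^\star} = F_\S$. But each $q_i^2 \in F$ and hence $f^\star \in F$; a standard splitting argument ($f^\star = (f^\star - \epsilon g) + \epsilon g$ combined with the face property of $F$) shows that any face containing $f^\star$ contains $F_{f^\star}$, so $F_\S = F_{f^\star} \subseteq F$. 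Together with the inclusion $F \subseteq F_\S$ recorded before the lemma this would force $F = F_\S$, contradicting the strict inclusion $F \subsetneq F_\S$ coming from $F \in \typeC$. So condition~(iv) of Theorem~\ref{algcon} must fail for at least one~$j$, which is exactly the moreover clause.

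The main obstacle I anticipate is part~(ii): one must correctly match the projective meaning of $\dim G_J(s_i)$ with the blow-up picture of $\inp_f(s_i)$, and remember to account for the point at infinity of the exceptional $\proj^1$ when arguing that a sum of squares on $\proj^1$ has no real zero. Once the blow-up bookkeeping is settled, the remaining parts and the contrapositive argument for the moreover clause are essentially formal consequences of Lemma~\ref{fpexp}, Theorem~\ref{algcon}, and the uniqueness of the face containing a given inner form.
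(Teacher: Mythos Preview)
Your proof is correct and follows essentially the same approach as the paper's: both arguments identify $\zero(J)$ with $\zero(f)$ via a sum-of-squares decomposition, read off (ii) and (iii) from the blow-up/order-of-vanishing computation, and obtain the moreover clause by contraposition from Theorem~\ref{algcon}. Your treatment is in fact more careful than the paper's (you justify why the inner form $f^\star$ of $F_\S$ forces $F_\S \subseteq F$, and you note that $\dim G_J(s_i) \leq 1$ is automatic by Euler's relation); the only cosmetic issue is the clash between your sum-of-squares factors $p_i$ and the points $p_j \in \S$.
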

\begin{proof}
Since $\zero(f)=\{s_1,\ldots,s_n,p_1,\ldots,p_m\}$ and $J\subset J_\S$ the first assertion must hold. 
Fix $1\leq i \leq n$ and choose affine coordinates in $s_i$. Suppose $G_J(s_i) < 1$, then there exists 
$v \in \R$ such that $\nabla q (s_i)$ is a multiple of $v$ for each $q \in J$. Hence 
\[ \bu{f}(0,y)= \lambda v^t (1,y)^t \]
for some $\lambda \in \R$. But this polynomial has a real zero (for $v=(1,0)^t$, this zero is at infinity) and 
hence $\inp_f(s_i) \neq \emptyset$, which is a contradiction. This proves the second assertion. 
If $G_J(p_j)=\emptyset$ for a $1\leq j \leq m$, then every $q \in J$ has a singularity in $p_j$. Hence $\ord_{p_j}(f)=4$, 
which is also a contradiction. Last suppose $\zero(E_J(p_j)) \cap l_j \neq \{p_j\}$ for all $j$. Then theorem \ref{algcon}
implies that $F=F_\S$, which is a contradiction.     
\end{proof}

\begin{lemma}
\label{satcrit1}
Let $F \subseteq \psd$ be a face and let $J$ be an arbitrary subspace of $J_F$ with basis $q_1,\ldots,q_k$. If 
\[ q_1^2+ \dots +q_k^2- \epsilon h^2 \]
is indefinite for all $h \in J_F\setminus{J}$ and all $\epsilon>0$. Then $J=J_G$ for a face $G \subseteq F$.    
\end{lemma}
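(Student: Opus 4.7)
The natural candidate is $G := F_f$ where $f := q_1^2 + \cdots + q_k^2$. My plan is to verify that $G$ is a face of $\psd$ contained in $F$, and that $J_G = J$. Since each $q_i$ lies in $J_F$, we have $q_i^2 \in F$ and hence $f \in F$. By the remark following Definition \ref{facedef}, $F_f$ is automatically a face of $\psd$. To see $F_f \subseteq F$, given any $g \in F_f$ I would write $f = (f - \epsilon g) + \epsilon g$ with $\epsilon > 0$ small enough that $f - \epsilon g \in \psd$; since $F$ is a face of $\psd$ containing $f$, both summands must lie in $F$, and in particular $g \in F$.

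For the inclusion $J \subseteq J_G$, I would observe that for each basis element $q_i$ and any $\epsilon \in (0,1)$,
\[ f - \epsilon q_i^2 \;=\; (1-\epsilon)\, q_i^2 + \sum_{j \neq i} q_j^2 \;\in\; \psd, \]
so $q_i^2 \in F_f$, i.e.\ $q_i \in J_{F_f} = J_G$. Since $J_G$ is a vector space (by the lemma preceding Convention \ref{conv1}), $J = \spann(q_1,\ldots,q_k) \subseteq J_G$.

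The essential step, and the only place the indefiniteness hypothesis enters, is the reverse inclusion $J_G \subseteq J$. Given $q \in J_G$, the containment $G \subseteq F$ yields $q \in J_F$; and $q^2 \in F_f$ means $\sum_i q_i^2 - \epsilon q^2 \in \psd$ for some $\epsilon > 0$. In particular this form is not indefinite, so the assumption forbids $q \in J_F \setminus J$, forcing $q \in J$.

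I do not expect a serious obstacle here: the argument is essentially a direct unpacking of the definition of $F_f$ and of $J_G$ once one guesses the candidate $G = F_f$. The indefiniteness hypothesis is precisely tailored to exclude elements of $J_F \setminus J$ from $J_G$, which is what makes the reverse inclusion work; if there is any subtlety, it is only in remembering that $F_f$ is automatically contained in every face of $\psd$ that contains $f$, which is what pins down $G \subseteq F$.
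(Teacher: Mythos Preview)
Your proof is correct and follows essentially the same approach as the paper: set $G=F_f$ with $f=q_1^2+\dots+q_k^2$, observe $F_f\subseteq F$ since $f\in F$, note each $q_i\in J_f$ so $J\subseteq J_f$, and use the indefiniteness hypothesis to rule out any $h\in J_f\setminus J$. Your write-up is simply more detailed than the paper's terse version.
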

\begin{proof}
We set $g:=q_1^2+\dots+q_k^2$ and claim $J=J_g$. Since $g \in F$ we have $F_g \subseteq F$. 
By definition we have $q_1,\ldots,q_k \in J_g$ and thus $J \subseteq J_g$. Let now $h \in J_g \subseteq J_F$. Then there 
exists $\epsilon>0$ such that $g-\epsilon h^2$ is nonnegative. Thus we must have $h \in J$.  
\end{proof}

\begin{cor}
\label{satcrit2}
Let $F \subseteq \psd$ a face and $J$ a subspace of $J_F$ of codimension 1 with basis $q_1,\ldots,q_k$. If there exists 
$h\in J_F\setminus J$ such that 
\[ q_1^2+\dots+q_k^2 - \epsilon h^2 \]
is indefinite for all $\epsilon >0$ then there exists a face $G\subsetneq F$ such that $J=J_G$. 
\end{cor}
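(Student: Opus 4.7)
The plan is to apply Lemma \ref{satcrit1}, using the codimension-$1$ hypothesis to upgrade the single witness $h$ to the full condition required by that lemma. First I would set $g := q_1^2 + \dots + q_k^2$. Since each $q_i \in J \subseteq J_F$ we have $q_i^2 \in F$, hence $g \in F$, and because $F$ is a face this gives $F_g \subseteq F$ and consequently $J_g \subseteq J_F$. On the other hand, for every $i$ the difference $g - q_i^2 = \sum_{j \neq i} q_j^2$ is a sum of squares and therefore lies in $\psd$, which means $q_i \in J_g$. Thus $J \subseteq J_g \subseteq J_F$.

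The key observation is now purely dimensional: since $J$ has codimension $1$ in $J_F$ and $J_g$ is sandwiched between them, we must have either $J_g = J$ or $J_g = J_F$. To rule out the latter, I use the given $h \in J_F \setminus J$. If $h$ were in $J_g$, the definition of $J_g$ would supply an $\epsilon > 0$ with $g - \epsilon h^2 \in \psd$; but then $g - \epsilon h^2$ is nonnegative, in particular not indefinite, contradicting the hypothesis that $q_1^2 + \dots + q_k^2 - \epsilon h^2$ is indefinite for every $\epsilon > 0$. Hence $J_g \neq J_F$, forcing $J_g = J$.

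Taking $G := F_g$ then yields a face with $J_G = J$. To see $G \subsetneq F$, note that equality would give $J_F = J_G = J$, contradicting the assumption that $J$ has codimension $1$ in $J_F$.

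I do not expect any genuine obstacle here: the corollary is essentially a codimension-$1$ sharpening of Lemma \ref{satcrit1}, and the only new ingredient is the observation that $J_g$ is forced by dimension to coincide with one of $J$ or $J_F$. The slight subtlety to keep in mind is distinguishing "indefinite" from merely "not in $\psd$", but since $g - \epsilon h^2 \in \psd$ means nonnegative, it is automatically not indefinite, so the contradiction goes through cleanly.
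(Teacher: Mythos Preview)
Your proposal is correct and follows essentially the same route as the paper. Both arguments set $g=q_1^2+\dots+q_k^2$, use the sandwich $J\subseteq J_g\subseteq J_F$ together with the codimension-$1$ hypothesis to force $J_g\in\{J,J_F\}$, and rule out $J_g=J_F$ via the witness $h$; the only cosmetic difference is that the paper phrases this as verifying the hypothesis of Lemma~\ref{satcrit1} for \emph{every} $p\in J_F\setminus J$, whereas you conclude $J_g=J$ directly.
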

\begin{proof}
We apply the preceding lemma.
We set $g:=q_1^2+\dots+q_k^2$.  
Suppose there exists $p \in J_F\setminus{J}$ and $\epsilon>0$ such that $g-\epsilon p^2$ is nonnegative. 
This implies $p \in J_g$. On the other hand we have $J \subseteq J_g$. Since $J$ has codimension 1 in $J_F$ 
we must have $J_g=J_F$ and in particular $h \in J_g$, which is a contradiction.  
\end{proof}

Lemma \ref{nfdcrit} provides a method to find all faces in $\typeC$.  We proceed in the following way. 
First we fix a configuration of $\S$. Next we will compute all subspaces $J$ of $J_\S$ which satisfy the conditions of lemma 
\ref{nfdcrit}. Then we will cancel out redundancies regarding equivalent faces. Last we will use Corollary \ref{satcrit2} in 
order to show that these subspaces arise in fact from faces.  
Since there are only 8 inequivalent configurations of $\S$ this procedure will eventually give us all faces of 
$\typeC$. 

We will now distinguish cases according to different types of $\S$. 
If $m=0$ then theorem \ref{mainthm} implies that $f$ is an inner Form of $F_\S$ and hence $F =F_\S \in \typeB$. Thus 
we may assume $m>0$. 

The first interesting case is $m=1$ and $n=0$. We may assume $\S=\T_1=\{(e_1,e_1 \vee e_2)\}$. 
 A basis of $J_{\T_1}$ is given by 
\[
\left\{ xy, y^2, yz, z^2\right\}. 
\]
We are interested in subspaces $J \subset J_S$ satisfying the conditions of lemma \ref{nfdcrit}.
We write $q_i:=A_i xy + B_i y^2 + C_i yz + D_i z^2$ for the basis elements of $J$, where $1\leq i \leq \dim J$. 
Condition (iii) implies that $A_i \neq 0$ for one $i$. Therefore we may assume $A_1=1$ and $A_i=0$ for $i \neq 
1$. Thus $E=\{ q \in J \colon \ \nabla q(e_1)=0 \}$ is spanned by the $q_i$ with $i=2,\dots,\dim J$.
Since we demand $\zero(E) \cap (e_1 \! \vee \! e_3) \neq \{e_1\}$ we also have $D_i=0$ for 
$i \neq 1$. We distinguish cases according to the dimension of $J$. \\ 

Case 1: $\dim J =3$. \\
In this case we may assume that $J$ has basis 
\[ \{xy + D z^2,yz, y^2 \}, \quad D \in \R. \]
We have $\zero(J) = \{e_1\}$ if and only if $D \neq 0$. 

The matrix $M=\operatorname{diag}(\frac{1}{D},1,1)$ induces an element of $\stab(\T_1) \subset \G$, sending
$J$ to $\spann\{ xy+z^2,yz,y^2\}$. Therefore we assume without loss of generality $D=1$. 
Last we check the condition of Corollary \ref{satcrit2}.
For $\epsilon>0$ we set 
\[ g:= (xy + z^2)^2 + y^2z^2+ y^4 - \epsilon z^4. \]
For $t \in \R$ we have
\[ g(1,-t^2, t)=  t^8 + t^6 - \epsilon t^4, \]
which is negative for all $\epsilon>0$ and sufficiently small $t$.   
For $f=(xy+z^2)^2+y^2z^2+y^4$ we denote $F_f$ and $J_f$ by $F_{\T_1}^*$ and $J_{\T_1}^*$ respectively. \\ 
\pagebreak[2]

Case 2: $\dim J =2$ \\
We start with two basis vectors $q_1=xy+ B_1y^2+C_1yz+ D_1z^2$ and $q_2= B_2 y^2 + C_2 yz $. Again we must have 
$D_1 \neq 0$, otherwise $q_1$ and $q_2$ vanish on the line $\Var(y)$. If $D_1\neq 0$ the line $\Var(y)$ intersects $q_1$ of 
order two in $e_1$.  Suppose $C_2 \neq 0$, then $q_2$ factors in two distinct lines. Since $\Var(y)$ intersects $q_1$ of 
order two in $e_1$, the other line has only intersects $q_1$ of order one in $e_1$, and therefore there exists another real 
intersection with $q_1$, which is a contradiction to $\zero(J)=\{e_1\}$. Thus we must have $C_2=0$ and we may assume  
$q_1=xz+C yz + D z^2$ and $q_2=y^2$ with $C,D \in \R$, $D \neq 0$. 

The matrix 
\[ 
 M= \left( \begin{array}{ccc}
1& 0& -C \\
0 & 1/D &0 \\
0 & 0 & 1\\
\end{array}
 \right)
\]
induces an element of $\stab(\T_1) \subset \G$ sending $J$ to $\spann\{xy+z^2,y^2\}$. Therefore we can assume 
$D=1,C=0$. 
 
By construction $J$ satisfies condition (i),(ii) and (iii) of lemma \ref{nfdcrit}. Since $J 
\subset J_{\T_1}^*$ we can apply Corollary \ref{satcrit2} and set
\[ g:= (xy+z^2)^2+y^4 - \epsilon y^2z^2, \quad \epsilon >0. \]
For $t \in \R$ we have 
\[
 g(1,-t^2,t) = t^8 -\epsilon t^6, 
 \]
which is negative for all $\epsilon>0$ and sufficiently small $t$. 
For $f=(xy+z^2)^2+y^4$ we denote $F_f$ and $J_f$ by $F_{\T_1}^{**}$ and $J_{\T_1}^{**}$ respectively. \\ 

Case 3: $\dim J=1$ \\
In this case $J$ is spanned by a single element $q$. By lemma \ref{nfdcrit} we must have $\zero(q)=\{e_1\}$ on the other 
hand we demand $\nabla q(e_1) \neq 0$. Such a $q \in \form_{2}$ cannot exist.   \\ \\
Thus up to know we have classified all faces $F_f$ which have an inner form $f$ with exactly one real zero.

We will now consider the case $m=1$ and $n=1$. It is enough to consider $\T_2=\{e_2,(e_1,e_1\vee e_3)\}$.
A basis of $J_{\T_2}$ is given by 
\[
\left\{ xy,yz,z^2  \right\}.
\]
We assume that a subspace $J \subset J_{\T_2}$ has basis $q_i=A_ixy + B_i yz+C_iz^2$ for $1\leq i\leq \dim J$.  
We distinguish cases according to the dimension of $J$. \\ 

Case 1: $\dim J=2$ \\
We first look at $e_1$. 
Condition (iii) of lemma \ref{nfdcrit} implies that the monom $xy$ has to occur in one of the $q_i$.  
Hence we can assume, that $A_1=1$ and $A_2=0$. Then $E=\{ q \in J \colon \ \nabla q(e_1)=0 \}$ is spanned by $q_2$. 
In  particular we have $\zero(E) \supset (e_1 \! \vee \! e_2 )$, thus the last condition holds independently of the choice 
of the remaining coefficients. 
For $e_2$ condition (ii) of lemma \ref{nfdcrit} implies $B_2 \neq 0$. Thus we can assume $B_2=1$ and $B_1=0$.  
Last we consider $\zero(J)=\zero(xy+C_1z^2,yz+C_2z^2)$. 
The form $q_2$ factors in the linear forms $l_1=z$ and $l_2=y+C_2z$. Then $e_1,e_2 \in \zero(l_1)$ and $e_1 \in \zero(l_2)$. 
Since $l_1\neq l_2$  the forms $q_1$ and $q_2$ intersect transversely in $e_2$. Since we demand
$\zero(q_1,q_2)=\{e_1,e_2\}$ the forms $q_1$ and $q_2$ have to be coprime and must intersect of order 3 in $e_1$. 
Thus $l_2$ must intersect $q_2$ of order~2 in $e_1$, which implies $C_2=0$. On the other hand we must demand $C_1\neq0$ 
otherwise $y \mid q_1$ and $y \mid q_2$.    
Thus we can assume that a basis of $J$ has the form $q_1=xy+ C z^2$, $q_2= yz$ with $C \neq 0$. Applying the projective 
transformation induced by the matrix $\diag(1,C,1)$ 
we can assume that $J$ has basis $\{xy + z^2,yz\}$. By construction $J$ satisfies condition (i)-(iii) of lemma 
\ref{nfdcrit}. We apply Corollary \ref{satcrit2} and set $f:=(xy+z^2)^2 +y^2z^2$. Since $J$ has codimension 1 in $J_{\T_2}$ 
it is enough to show that there exists no $\epsilon >0$ such that $g:=f-\epsilon z^4$ 
is nonnegative. In fact we have 
\[ g(1,-t^2,t)= t^6-\epsilon t^4, \]
which is negative for all $\epsilon>0$ and sufficiently small $t \in \R$. For $f=(xy+z^2)^2 +y^2z^2$ we denote $F_f$ and 
$J_F$ by $F_{\T_2}^{*}$ and $J_{\T_2}^{*}$ respectively. \\ 

Case 2: $\dim J =1$. \\
In this case we can assume that $J$ is spanned by a single form $p$. But for each $p \in \form_{2}$ we have 
$|\zero(p)|\leq 1$ or $|\zero(p)|=\infty$. Thus $J$ does not meet the requirements of lemma 
\ref{nfdcrit}.   \\ 
 
Since $\dim J_{\T_3}=\dim J_{\T_3}=2$ the preceding argument shows also that there exists no face $F \in \typeC$ with $F 
\subset F_{\T_3}$ or $F \subset F_{\T_4}$. In particular we have now obtained a complete description of $\typeC$.   
The following theorem summarizes the results of this section. 

\begin{theorem}
The set $\typeC$ splits in 4 equivalence classes with respective dimensions as follows 
\begin{center}
\begin{tabular}{l|c|c|c|c}
\emph{Face} $F$ &$F_\D$& $F_{\T_1}^*$ & $F_{\T_1}^{**}$ & $F_{\T_3}^*$  \\ \hline 
$\dim F$ &5& 6 & 3 & 3   \\ 
$\dim J_F$&3& 3 & 2& 2  \\ 
\end{tabular}
\end{center} 
\end{theorem}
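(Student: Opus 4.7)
The approach is to assemble the pieces already produced in this section. The opening dichotomy shows that any $F \in \typeC$ admits an inner form $f$ with finite zero set (otherwise $F \sim F_\Qu$ or $F \in \typeA$), and that either some zero of $f$ has order $4$ or every real zero of $f$ has order exactly $2$. I would first dispatch the order-$4$ case, which was already shown to yield $F \sim F_\D$ with $\spann F = \R[y,z]_4$ of dimension $5$ and $J_{F_\D} = \R[y,z]_2$ of dimension $3$.

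In the order-$2$ case, $f$ determines a configuration $\S$ with $F \subsetneq F_\S$. Theorem \ref{mainthm} excludes $m=0$ (which would put $F=F_\S \in \typeB$), so $m \geq 1$; the argument opening the $\typeB$ section excludes $2m+n \geq 5$ (which would force $F \sim F_\Qu \in \typeB$); and Lemma \ref{orbits} then reduces $\S$ to one of $\T_1,\T_2,\T_3,\T_4$. I would then invoke the explicit case analysis carried out above: $\T_1$ produces the two faces $F_{\T_1}^*$ and $F_{\T_1}^{**}$, $\T_2$ produces $F_{\T_2}^*$, while $\T_3$ and $\T_4$ produce nothing, because $\dim J_{\T_3} = \dim J_{\T_4} = 2$ forces any proper subspace of $J_{\T_i}$ to be one-dimensional, and no such subspace can satisfy Lemma \ref{nfdcrit}.

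The $\dim J_F$ row is read off directly from the explicit bases exhibited in the constructions. For the $\dim F$ row I would use Lemma \ref{fpexp}(iii), writing $\spann F = J_F^{[2]}$, and verify linear independence of the pairwise products of a chosen basis of $J_F$ in each case; for instance the three generators of $J_{F_{\T_1}^*}$ yield six independent products, matching the tabulated dimension $6$. Finally, the four classes are pairwise inequivalent: dimension alone separates $F_\D$ (dim $5$), $F_{\T_1}^*$ (dim $6$), and the pair of $3$-dimensional faces from each other, while within that pair the $\G$-invariant quantity $|\zero(f)|$ for an inner form $f$ gives $1$ for $F_{\T_1}^{**}$ (inner form $(xy+z^2)^2 + y^4$) and $2$ for $F_{\T_2}^*$ (inner form $(xy+z^2)^2 + y^2z^2$).

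The essential difficulty has already been absorbed by the subsection arguments: the normalization of candidate subspaces $J \subsetneq J_{\T_i}$ under $\stab(\T_i)$ and the application of Corollary \ref{satcrit2} to realize each normalized $J$ as $J_G$ for a genuine face $G \in \typeC$. Consequently the present theorem reduces to organizing that data and adding the short inequivalence argument above.
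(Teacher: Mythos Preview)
Your proposal is correct and follows essentially the same line as the paper's own proof: the classification into four classes is assembled from the preceding case analysis, the $\dim J_F$ entries are read off from the explicit bases, $\dim F$ is obtained via $\spann(F)=J_F^{[2]}$, and pairwise inequivalence is settled by dimension together with the $\G$-invariant number of common real zeros. The paper's proof is terser (it treats the exhaustiveness of the list as already established by the section and only records the dimension and inequivalence arguments), but the content is the same.
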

\begin{proof}
The dimensions in the last line were obtained during the construction of the according faces. To obtain the dimension 
of $F$ we use the fact $J_F^{[2]}=\spann(F)$. 
It is left to show that these faces are pairwise inequivalent. However this is apparent because the $\G$--operation 
preserves the dimension and the number of common real zeros of a face. 
\end{proof} 

\begin{bem}
One can compute the entire list of faces of $\psd$ without assuming that a nonnegative form is a sum of squares. Since all 
extremal forms are perfect squares and every element in a cone is a finite sum of extremal elements, one obtains an 
alternative proof that every nonnegative ternary quartic is a sum of squares. 
\end{bem}

Now we have an entire list of faces of $\psd$. In the final two sections we will discuss exposednes of faces and inclusions 
between them. 

\section{Exposed faces}
We have already seen that all faces in $\typeA$ and faces which are equivalent to one of $F_{\emptyset}$, $F_{\S_1}$, $ 
F_{\S_2}$, $F_{\S_3}$ ,$F_{\S_4}$ or  $F_\Qu$ are exposed. We will now show that all remaining faces are not exposed. 

Each linear map $L\colon \form_{3,4} \longto \R$ which is nonnegative on $\psd$ induces a positive semidefinite 
symmetric bilinear form $B_L\colon \form_{3,2} \times \form_{3,2} \longto \R$ given by $B_L(f,g)=L(fg)$. 
We define $\ker(B_L):=\{f \in \form_{3,2} \colon \ B_L(f,g)=0, \  \forall g \in \form_{3,2}\}$. 

\begin{lemma}
Let $F$ be an exposed face defined by $L$. Then we have $\ker(B_L)=J_F$. 
\end{lemma}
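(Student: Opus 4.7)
The plan is to prove the two inclusions $J_F\subseteq \ker(B_L)$ and $\ker(B_L)\subseteq J_F$ separately, using only that $L$ is nonnegative on $\psd$, that $F=\ker(L)\cap\psd$, and the fact that squares $q^2$ lie in $\psd$.

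For the inclusion $J_F\subseteq\ker(B_L)$, I would begin by taking $q\in J_F$, so that $q^2\in F\subseteq \ker(L)$. This gives $B_L(q,q)=L(q^2)=0$. The key observation is that $B_L$ is a positive semidefinite symmetric bilinear form (indeed, for every $f\in\form_{3,2}$, $B_L(f,f)=L(f^2)\geq 0$ because $f^2\in\psd$ and $L$ is nonnegative on $\psd$). Hence Cauchy--Schwarz applies:
\[
B_L(q,g)^2 \;\leq\; B_L(q,q)\,B_L(g,g)\;=\;0
\]
for every $g\in\form_{3,2}$, so $B_L(q,g)=0$ for all $g$ and therefore $q\in\ker(B_L)$.

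For the reverse inclusion, I would take $q\in\ker(B_L)$, which in particular gives $L(q^2)=B_L(q,q)=0$. Since $q^2\in\psd$ and $L$ vanishes on $q^2$ while being nonnegative on $\psd$, we have $q^2\in\ker(L)\cap\psd=F$, so $q\in J_F$ by definition.

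There is no real obstacle here: the whole argument hinges on the Cauchy--Schwarz inequality for positive semidefinite bilinear forms (which guarantees that an isotropic vector of a PSD form lies in the radical). The only point deserving a moment of care is the verification that $B_L$ really is positive semidefinite, which is immediate since $B_L(f,f)=L(f^2)$ and $f^2\in\psd$ for every $f\in\form_{3,2}$.
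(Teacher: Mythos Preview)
Your proof is correct and essentially identical to the paper's. The paper compresses the argument into a chain of equivalences, using implicitly the fact that for a positive semidefinite bilinear form the isotropic vectors coincide with the radical; you make this step explicit via Cauchy--Schwarz, but the content is the same.
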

\begin{proof}
Let $q \in \form_{3,2}$. Since $B_L$ is positive semidefinite we have 
\begin{align*}
q \in J_F &\iff q^2 \in F \\
          &\iff L(q^2)=0 \\
          &\iff B_L(q,q)=0 \\
          &\iff q \in \ker(B_L).
\end{align*}
\end{proof}  

\begin{lemma}
\label{exp}
Let $F \subseteq \psd$ a face. If there exists $f \in J_F$ and $g,c,d \in \form_{3,2}$ such that 
$0 \not \equiv c \equiv d \pmod{J_F}$ and $fg=cd$, then $F$ is not exposed. This is in particular the case if 
we find $f \in J_F$ and $c \notin J_F$ such that  $fg=c^2$.
\end{lemma}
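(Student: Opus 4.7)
The plan is to argue by contrapositive: assume that $F$ is exposed by a linear functional $L$, and show that the configuration $(f,g,c,d)$ cannot exist. The crucial input is the preceding lemma, which identifies $\ker(B_L) = J_F$ for the positive semidefinite symmetric bilinear form $B_L(u,v):=L(uv)$ on $\form_{3,2}$. Together with positive semidefiniteness this also yields the useful consequence that $B_L(u,u)=0$ forces $u \in \ker(B_L) = J_F$.

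The main step is to evaluate $L(fg)$ in two ways. On the one hand $L(fg) = B_L(f,g) = 0$, because $f \in J_F = \ker(B_L)$. On the other hand, using the hypothesis $fg = cd$ and writing $d = c + (d-c)$ with $d - c \in J_F = \ker(B_L)$, bilinearity gives
\[ L(fg) \;=\; L(cd) \;=\; B_L(c,d) \;=\; B_L(c,c) + B_L(c,\,d-c) \;=\; B_L(c,c). \]
Comparing the two expressions I obtain $B_L(c,c) = 0$, hence $c \in \ker(B_L) = J_F$, contradicting the assumption $c \not\equiv 0 \pmod{J_F}$. This rules out $F$ being exposed.

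For the ``in particular'' clause I would simply specialize $d := c$: then the congruence $c \equiv d \pmod{J_F}$ holds trivially, while $c \not\equiv 0 \pmod{J_F}$ is given, so the general case applies to the factorization $fg = c \cdot c = cd$.

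I do not anticipate any real obstacle: once the identification $\ker(B_L) = J_F$ from the previous lemma is available, the argument is a two-line bilinear computation exploiting that $J_F$ is precisely the radical of $B_L$. The only thing worth being careful about is invoking positive semidefiniteness to pass from $B_L(c,c)=0$ to $c \in \ker(B_L)$.
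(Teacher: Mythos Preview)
Your proof is correct and essentially identical to the paper's: both argue by contradiction, use $B_L(f,g)=0$ together with $fg=cd$ to get $B_L(c,d)=0$, and then exploit $c\equiv d\pmod{J_F=\ker B_L}$ to conclude $B_L(c,c)=0$ (the paper does the symmetric computation ending at $B_L(d,d)=0$), whence $c\in J_F$ by positive semidefiniteness, a contradiction. Your handling of the ``in particular'' clause is also the intended one.
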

\begin{proof}
Suppose $F$ is exposed and defined by the linear form $L$. 
We set $r:=c-d \in J_F$. Then we have 
\[ 0=B_L(f,g)=B_L(c,d)=B_L(d,d+r)=B_L(d,d) \]
which implies $d \in \ker B_L(d,d) =J_F$. This is a contradiction. 
\end{proof}

\begin{theorem}
All faces in $\typeC$ as well as faces which are equivalent to one of $F_{\T_1},F_{\T_2},F_{\T_3},F_{\T_4}$
are not exposed. 
\end{theorem}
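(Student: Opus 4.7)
The plan is to invoke Lemma~\ref{exp}. Since exposedness is preserved under the $\G$-action, it suffices to verify the hypothesis of that lemma on a single representative of each of the eight equivalence classes. For each class I will exhibit an $f \in J_F$, a quadratic $g \in \form_2$, and quadratics $c,d$ with $fg=cd$, $c-d \in J_F$ and $c \notin J_F$.

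For six of the eight faces, a monomial-square witness of the simpler form $fg=c^2$ works. For $F_{\T_1}$, $F_{\T_2}$, $F_{\T_3}$ one reads off from the bases computed in Lemma~\ref{Jcomp} that $z^2 \in J_F$ while $xz \notin J_F$, so $f=z^2$, $g=x^2$, $c=xz$ yields $fg=(xz)^2$. For the three $\typeC$ faces $F_\D$, $F_{\T_1}^*$, $F_{\T_1}^{**}$, the computations in Section~9 show that $y^2 \in J_F$ and $xy \notin J_F$, so the analogous choice $f=y^2$, $g=x^2$, $c=xy$ succeeds.

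The remaining faces $F_{\T_4}$ and $F_{\T_2}^*$ need the more flexible form of the criterion, because neither $J_F$ contains a reducible square of a linear form whose complementary linear factor lies outside $J_F$. For $F_{\T_4}$, where $J_F = \spann(yz,\, xy-xz)$, take $f=yz \in J_F$ and $g = x^2$; then $fg = x^2yz = (xy)(xz)$, and $c-d = xy-xz = x(y-z)$ lies in $J_F$ while $c=xy$ does not. For $F_{\T_2}^*$, where $J_F = \spann(xy+z^2,\, yz)$, take $f=yz \in J_F$ and $g=-xz$; then $fg = -xyz^2 = (xy)(-z^2)$, and $c-d = xy+z^2 \in J_F$ while $c=xy \notin J_F$ (since an element of $J_F$ must have equal coefficients in front of $xy$ and $z^2$).

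The only real obstacle is spotting these two non-square factorizations: in each case one must exploit the specific nontrivial element of $J_F$ (namely $xy-xz$ or $xy+z^2$) to rewrite the product $fg$ as $cd$ with the required congruence. Once the witnesses are in hand, verifying $c-d \in J_F$ and $c \notin J_F$ is a short linear-algebra check against the explicit basis of $J_F$, and the $\G$-invariance of exposedness handles the extension from the chosen representative to its full equivalence class.
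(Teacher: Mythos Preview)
Your proof is correct and follows essentially the same strategy as the paper's: invoke Lemma~\ref{exp} on each of the eight representatives, using the square witness $fg=c^2$ for six of them and a genuine two-factor witness $fg=cd$ for $F_{\T_4}$ and $F_{\T_2}^*$. Your specific witnesses are the same as (or trivial variants of) the paper's, and in fact your choice of $c=xy$ for $F_{\T_1}^*$ is cleaner than the paper's stated choice; the only quibble is that the explicit bases of $J_{\T_i}$ are computed in Section~8 rather than in Lemma~\ref{Jcomp}.
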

\begin{proof}
We use lemma \ref{exp}. For $i=1,2,3$ we have $x^2 \in J_{\T_i}$ but $xz \notin J_{\T_i}$. Thus the faces 
$F_{\T_1},F_{\T_2}$ and $F_{\T_3}$ cannot be exposed. In order to show that $F_{\T_4}$ is not exposed we note 
that 
\[ yzx^2= \big(z(x-y)+yz+x(y-z)\big)\cdot \big(z(x-y)+yz\big). \]
Since $yz,x(y-z) \in J_{\T_4}$ but $z(x-y) \notin J_{\T_4}$ lemma \ref{exp} implies that $F_{\T_4}$ is not exposed. 
In the same way we conclude from $y^2 \in J_{\T_1}^*,J_{\T_1}^{**}$ and $yz \notin J_{\T_1}^*,J_{\T_1}^{**}$ that 
$F_{\T_1}^*$ and $F_{\T_1}^{**}$ are not exposed. The same argument applied to $y^2 \in J_\D$ and $xy \notin J_\D$ shows 
that $F_\D$ cannot be exposed. Last we note $0 \not\equiv -xy \equiv  z^2 \pmod{J_{\T_2}^*}$ but $yz \in J_{\T_2}$. 
Thus the equality $-(xz)(yz)=-xyz^2$ implies that $F_{\T_2}^*$ is not exposed.  
\end{proof}

\section{Inclusions}
\begin{defi}
We define a partial ordering $\Subset$ on $\mathcal{F}$ by 
\[ [F] \Subset [G]  \quad \text{if and only if there exists $\sigma \in \GL$ such that $F \subseteq \sigma G$}. \]
\end{defi}

Our goal is to give a complete description of all inclusions between equivalence classes of faces. In our preceding 
discussion we have chosen all representatives of faces in a way that makes most inclusions (and non-inclusions) obvious. 
We will discuss the ambiguous cases. 

\begin{lemma}
Let $F,G \subset \psd$ be faces with $\dim F=\dim G$. Then we have $F \Subset G$ if and only if $F \sim G$. 
\end{lemma}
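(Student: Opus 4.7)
The $(\Leftarrow)$ direction is immediate from the definition: if $F \sim G$ there exists $\sigma \in \G$ with $\sigma G = F$, whence $F \subseteq \sigma G$ and thus $F \Subset G$. So the whole content is in $(\Rightarrow)$.

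For $(\Rightarrow)$ I would apply the preceding lemma (which states $\dim(\sigma G)=\dim G$ and that $\sigma G$ is again a face) to reduce the claim to the following purely convex-geometric statement: if $F'$ is a face of $\psd$ and $F\subseteq F'$ with $\dim F=\dim F'$, then $F=F'$. Granted this, applying it with $F'=\sigma G$ where $\sigma$ witnesses $F\Subset G$ gives $F=\sigma G$, hence $G\sim F$, which is what we want.

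To prove the reduced statement, the key observation is that every face of a convex cone $C\subseteq\R^k$ satisfies $F = C\cap\spann(F)$. This is exactly the equivalence built into Definition \ref{facedef}, but it can also be checked directly: if $v \in C \cap \spann(F)$, write $v = u - w$ with $u,w\in F$ (using that $F$ is closed under positive combinations); then $u = v+w \in F$ with $v,w\in C$, and the face property yields $v\in F$. Now if $F\subseteq F'$ with $\dim F=\dim F'$, then $\spann(F)\subseteq\spann(F')$ have equal dimension, so they coincide, and therefore
\[
F = \psd\cap\spann(F) = \psd\cap\spann(F') = F'.
\]

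I do not foresee a real obstacle here; the only subtle point is invoking the full-in-its-span property of faces, which is already part of the paper's framework. The rest is a one-line dimension comparison combined with the observation from the previous lemma that the $\G$-action preserves dimension and sends faces to faces.
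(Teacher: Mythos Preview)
Your argument is correct and follows essentially the same route as the paper: reduce via the $\G$--action to an honest inclusion $F\subseteq F'$ of faces of equal dimension, then use $F=\psd\cap\spann(F)$ to conclude $F=F'$. The paper states this in one line without justifying the identity $F=\psd\cap\spann(F)$, whereas you spell out that verification; otherwise the proofs coincide.
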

\begin{proof}
Suppose $F \subset G$. Since $\dim F=\dim G$ we have $\spann(F)= \spann(G)$ and thus 
\[F=\spann(F)\cap \psd=\spann(G)\cap \psd = G.\] 
\end{proof}

To simplify the situation we first discuss the lattice of $\typeA$ relative to $\Subset$. 

\begin{theorem}
Relative to $\Subset$ the equivalence classes within $\typeA$ form following lattice. 
\label{binclusion}
\[ \begin{xy}
\xymatrix{ &F_{\Li_0} \ar@{-}[ld] \ar@{-}[rd] & \\
F_{\Li_1} \ar@{-}[d] & & F_{\Li_2} \ar@{-}[d] \ar@{-}[lld]  \\
F_{\Li_3} \ar@{-}[rd] & & F_{\Li_4} \ar@{-}[ld] \\
& (0) &  }
\end{xy}
\]
\end{theorem}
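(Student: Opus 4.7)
The plan is to leverage the explicit parametrization of faces in $\typeA$ supplied by Theorem \ref{psdmatrix}: every such face has the shape $\{l^2 q : q \in \form_2 \text{ psd}, q \text{ vanishes on } U\}$ for a line $l$ and a subspace $U \subseteq \proj^2(\R)$. First I would record concrete representatives for the six classes following the preceding corollary, with $U$ ranging over $\emptyset$, a point off $l$, a point on $l$, a second line $k \neq l$, the line $l$ itself, and $\proj^2(\R)$. A small observation is crucial for the two one-dimensional classes: $F_{\Li_3}$ and $F_{\Li_4}$ collapse to the rays $\{c(lk)^2 : c \geq 0\}$ and $\{c l^4 : c \geq 0\}$ respectively, because a psd quadratic form vanishing on an entire line must be a nonnegative multiple of the square of that line. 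After this reduction, the whole statement becomes a comparison of six explicit subsets of $\form_4$.

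The same-dimension non-inclusions fall out instantly from the lemma just established: since the six classes are pairwise inequivalent, no two equal-dimensional classes can be $\Subset$-comparable. This rules out any relation between $F_{\Li_1}$ and $F_{\Li_2}$, and between $F_{\Li_3}$ and $F_{\Li_4}$, leaving only the seven drawn edges plus the potentially dangerous relation $F_{\Li_4} \Subset F_{\Li_1}$ to settle.

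For the seven positive edges I would pick suitable representatives. The inclusions $F_{\Li_1}, F_{\Li_2} \Subset F_{\Li_0}$ and $F_0 \Subset F_{\Li_3}, F_{\Li_4}$ are immediate, since dropping or enforcing a vanishing condition on $U$ enlarges or shrinks the face. For $F_{\Li_3} \Subset F_{\Li_1}$ and $F_{\Li_3} \Subset F_{\Li_2}$, I realize the ambient face with its distinguished point $P$ lying on the auxiliary line $k$ (possible in both cases since $k \ne l$ may be chosen to pass through any prescribed $P$), so that $c(lk)^2$ automatically vanishes at $P$. For $F_{\Li_4} \Subset F_{\Li_2}$ I use that $c l^4$ vanishes on the entire line $l$ and hence at any distinguished point on $l$.

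The main obstacle is the only non-trivial negative check, $F_{\Li_4} \not\Subset F_{\Li_1}$. I would argue by contradiction: if $c l^4 \in \sigma F_{\Li_1}$ for some $\sigma \in \G$, then $c l^4$ factors as $l'^2 q$ with $l'$ a real line and $q$ a psd quadratic vanishing at some $P' \notin l'$. Unique factorization of $l^4$ in $\R[x,y,z]$ forces $l'$ to be a scalar multiple of $l$, so $q$ is a positive multiple of $l^2$; but then $q(P')$ is a nonzero multiple of $l(P')^2$, contradicting $q(P') = 0$. The heart of the argument is therefore that the line of $F_{\Li_4}$ is rigidly attached to the ray, so it cannot accommodate a vanishing condition at a point lying off it.
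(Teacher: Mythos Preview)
Your argument is correct. The paper actually states this theorem without proof, presumably regarding it as a routine consequence of the parametrization in Theorem~\ref{psdmatrix} together with the preceding lemma on equal-dimensional faces; your write-up supplies exactly those details, and in particular correctly isolates $F_{\Li_4} \not\Subset F_{\Li_1}$ as the one non-inclusion that requires a genuine (if short) unique-factorization argument rather than a dimension count.
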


The following theorem clarifies the inclusions between faces in $\typeA$ and $\typeC$. 

\begin{lemma}
We have following inclusions 
\begin{enumerate}
\item $F_{\Li_2} \Subset F_{\T_1}^*$ but $F_{\Li_1} \not\Subset F_{\T_1}^*$.
\item $F_{\Li_4} \Subset F_{\T_1}^{**}$ but $F_{\Li_3} \not\Subset F_{\T_1}^{**}$.
\item $F_{\Li_3} \Subset F_{\T_2}^{*}$ but $F_{\Li_4} \not\Subset F_{\T_2}^{*}$. 
\item $F_{\Qu} \Subset 
F_{\T_1}^*,F_{\T_1}^{**},F_{\T_2}^*$.  
\end{enumerate}
\end{lemma}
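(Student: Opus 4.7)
The plan is to translate every inclusion into the language of $J$-spaces. Since $\spann(F) = J_F^{[2]}$ by Lemma~\ref{fpexp} and every face is full in its own span, one obtains $F \subseteq G \iff J_F \subseteq J_G$, so $F \Subset G$ iff $\sigma^{-1}(J_F) \subseteq J_G$ for some $\sigma \in \G$. From Theorem~\ref{psdmatrix} I read off the $\typeA$ $J$-spaces: $J_{F_{\Li_1}} = l \cdot \{m \in \form_1 : m(q)=0\}$ with $q \notin l$, similarly $J_{F_{\Li_2}}$ with $q \in l$, $J_{F_{\Li_3}} = \spann(lk)$, $J_{F_{\Li_4}} = \spann(l^2)$; moreover $J_{F_\Qu} = \spann(q)$ with $q$ a rank-$3$ indefinite ternary quadratic form. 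The target $J$-spaces were computed in the previous section: $J_{\T_1}^* = \spann(xy+z^2, yz, y^2)$, $J_{\T_1}^{**} = \spann(xy+z^2, y^2)$, and $J_{\T_2}^* = \spann(xy+z^2, yz)$.

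For the positive inclusions I will exhibit explicit witnesses. The subspace $\spann(y^2, yz) \subseteq J_{\T_1}^*$ equals $J_{F_{\Li_2}}$ for $l = \Var(y)$ and $p = e_1$, giving (i); likewise $y^2 \in J_{\T_1}^{**}$ and $yz \in J_{\T_2}^*$ yield (ii) and (iii). For (iv), the symmetric matrix of $xy + z^2$ has determinant $-1/4$, so rank three, and the substitution $x = u+v$, $y = u-v$ produces $u^2 - v^2 + z^2$, hence signature $(2,1)$; thus $xy+z^2$ is $\G$-equivalent to the form defining $F_\Qu$, and since it lies in each of the three target $J$-spaces, all three inclusions follow.

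For the non-inclusions the main tool is a rank analysis of elements of $J$. A direct computation on $\alpha(xy+z^2) + \beta yz + \gamma y^2 \in J_{\T_1}^*$ gives the associated symmetric matrix determinant $-\alpha^3/4$, so every rank-deficient element has $\alpha = 0$ and factors as $y(\beta z + \gamma y)$. Consequently the rank-$1$ elements of $J_{\T_1}^*$ are exactly the nonzero multiples of $y^2$, while every rank-$2$ element is divisible by $y$. To rule out $F_{\Li_1} \Subset F_{\T_1}^*$, note that any $F_{\Li_1}$-type $2$-dimensional subspace consists of elements sharing a common linear factor, hence of rank $\leq 2$; the rank analysis forces it into $\spann(y^2, yz) = y \cdot \spann(y, z)$, and then the subspace itself must be exactly $\spann(y^2, yz) = J_{F_{\Li_2}}$ (with $e_1 \in \Var(y)$), contradicting the $F_{\Li_1}$-type hypothesis that the marked point lies off the line. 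The identical determinant computation in $J_{\T_1}^{**}$ and $J_{\T_2}^*$ shows respectively that the former has no rank-$2$ elements (ruling out $F_{\Li_3} \Subset F_{\T_1}^{**}$) and the latter no rank-$1$ elements (ruling out $F_{\Li_4} \Subset F_{\T_2}^*$).

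The delicate step is the $2$D-subspace analysis for $F_{\Li_1} \not\Subset F_{\T_1}^*$: one must go beyond the pointwise rank count and verify that any $2$-dimensional subspace whose elements share a common linear factor is forced to be exactly $\spann(y^2, yz)$, and that the resulting point configuration matches $F_{\Li_2}$ rather than $F_{\Li_1}$. The remaining non-inclusions are immediate from the determinant formula.
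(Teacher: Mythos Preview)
Your proof is correct and proceeds by a genuinely different method than the paper. The paper argues directly with forms in $F$: it observes that $(l,U)\in\typeA$ lies in $F$ iff $F$ contains some $l^{2}q$ with $q$ psd and $\zero(q)=U$, and then for each non-inclusion runs an ad~hoc case analysis on the possible zero configurations (e.g.\ for $F_{\Li_1}\not\Subset F_{\T_1}^{*}$ it rules out $s\neq e_1$ via a dimension comparison with $F_{\Li_0}$ and rules out $s=e_1$ via $\inp_{g}(e_1)=\emptyset$). You instead translate everything into the $J$-spaces via $F\subseteq G\iff J_F\subseteq J_G$ and then reduce the non-inclusions to a single determinant computation: $\det\bigl(\alpha(xy+z^{2})+\beta yz+\gamma y^{2}\bigr)=-\alpha^{3}/4$ forces every rank-deficient element to lie in $\spann(y^{2},yz)$, whence the rank-$1$ and rank-$2$ content of $J_{\T_1}^{*}$, $J_{\T_1}^{**}$, $J_{\T_2}^{*}$ is read off immediately. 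Your argument is more uniform and avoids the separate geometric case splits; the paper's approach is closer to the running narrative of the article (zeros, infinitely near points, dimension counts). Both settle the lemma; yours requires the one extra remark---which you supply---that the factorisation $J=l\cdot V$ of a $2$-dimensional subspace of reducible conics is unique, so that $\spann(y^{2},yz)$ can only realise the $F_{\Li_2}$ configuration.
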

\begin{proof}
First we note that for a face $F \in \psd$ and $(l,U) \in \typeA$ we have $(l,U) \subset F$ if and only if 
$F$ contains a form $l^2q$ with $q \in \psd$ and $\zero(q)=U$. 
\begin{enumerate}
\item Since $z^2,yz \in J_{\T_1}^*$ we have $f=z^2(y^2+z^2) \in F_{\T_1}^*$ and thus $F_{\Li_2} \Subset F_{\T_1}^*$.
Suppose now there exists a line $l$ and  $q$ a positive semidefinite quadratic form with $\zero(q)=s \notin \zero(l)$ such 
that $g=l^2q \in F_{\T_1}^*$. If $s \neq e_1$ then $g \in F_{\T_1}$ implies that $l=y$. But then we have $F_{\Li_0} \Subset 
F_{f+g}$ which is a contradiction to $F_{f+g} \subset F_{\T_1}^*$ and $\dim F_{\T_1}^* < \dim F_{\Li_0}$. Hence we must have 
$s=e_1$. Hence we must have $q=Ay^2+Bz^2$ with $A,B >0$. Since $e_1 \notin \zero(l)$ we obtain $\inp_g(e_1)=\emptyset$, 
which is a contradiction. 
\item 
Since $y^2 \in J_{\T_1}^{**}$ we have $y^4 \in F_{\T_1}^{**}$ and in particular $F_{\Li_4} \Subset F_{\T_1}^{**}$. 
Let $l_1,l_2$ be two different lines in $\proj^2(\R)$ with intersection $s$ and set $f=l_1^2 l_2^2$. 
Suppose $f \in F_{\T_1}^{**}$. If $s \neq e_1$, then $l_1$ or $l_2$ must contain $e_1 \vee e_3$. Thus we can assume 
$l_1=y$. But then $F_{\Li_2} \Subset F_{f+y^4} \subset F_{\T_1}^{**}$. This is a contradiction since $\dim F_{\Li_2}=\dim 
F_{\T_1}^{**}$ but $F_{\T_1}^{**} \neq F_{\Li_2}$. Thus we must have $s=e_1$ in particular $\ord_{e_1}(f)=4$. Recall that 
$J_{\T_1}^{**}$ has basis $\{y^2,xy+z^2\}$.    Since $y^2$ is the only element having order 2 in $e_1$, we conclude that 
$y^4$ is the only element  in $F_{\T_1}^{**}$ with $\ord_{e_1}=4$.  Therefore we 
have $F_{\Li_3} \not\Subset F_{\T_1}^{**}.$ 
\item 
Since $yz \in J_{\T_2}^{*}$ we have $F_{\Li_3} \Subset F_{\T_2}^{*}$. Suppose there exists a line $l$ such that 
$f:=l^4 \in F_{\T_2}^{*}$. Since $l$ has to vanish at $e_1$ and $e_2$ we must have $l=z$. But then $z^4 \in 
F_{\T_3}^{*}$, which is not the case. 
\item 
The form $(xy+z^2)^2$ is a square of a regular indefinite quadratic form contained in $F_{\T_1}^*$,$F_{\T_1}^{**}$ and 
$F^*_{\T_2}$.
\end{enumerate}
\end{proof}

Eventually we draw a picture of the lattice of equivalence classes of faces with respective dimensions.  
To keep the picture clear we omit inclusions within $\typeA$, which are given by theorem 
\ref{binclusion}.   

\[
\begin{xy}
\xymatrix{ 
           15&&& F_{\emptyset}=\psd \ar@{-}[d] &&&& \\
           12&&& F_{\S_1} \ar@{-}[ld] \ar@{-}[rd] \\
           9&& F_{\S_2} \ar@{-}[ld] \ar@{-}[d] \ar@{-} [rrd]  && F_{\T_1} \ar@{-}[lld]|!{[d];[ll]}\hole 
\ar@{-}[drr]\ar@{-}[d]\ar@{-}@/^3pc/[rrrdd]&&&\\
           6&F_{\S_3} \ar@{-}[dd] \ar@{-}[ddr] \ar@{-}[ddrr]|!{[rdd];[rrr]}\hole& F_{\Li_0} && F_{\T_2}\ar@{-}[ddll] \ar@{-}[ddl] \ar@{-}[dd] \ar@{-}[ddr]  && F_{\T_1}^* \ar@{-}[dd] \ar@{-}[ddl] \ar@{-}[ddr] & \\
           5&&&  & & & &F_{\D} \ar@{-}[d]    \\
           3&F_{\S_4} \ar@{-}[dddr]\ar@{-}[dddrrr] & F_{\Li_1} & F_{\T_4} \ar@{-}[dddl] \ar@{-}[dddr]& F_{\T_3} \ar@{-}[dddll] \ar@{-}[ddd] & F_{\T_2}^* \ar@{-}[lllddd] \ar@{-}[dddl]  & F_{\T_1}^{**}\ar@{-}[dddll] \ar@{-}[ddd]  & F_{\Li_2}  \\
           &&&&&&&\\
           &&&&&&&\\
         1  && F_{\Li_3} \ar@{-}[rrd] & &  F_{\Qu} \ar@{-}[d] & &  F_{L_4} \ar@{-}[dll] & \\
         0  &&&&(0)& &&}
           
\end{xy}
\]

\end{document}